\documentclass[12pt,reqno]{amsart}
\usepackage{enumerate}
\usepackage{amsrefs}
\usepackage{amsfonts, amsmath, amssymb, amscd, amsthm, bm, cancel}
\usepackage{url}
\usepackage{graphicx}
\usepackage[
linktocpage=true,colorlinks,citecolor=magenta,linkcolor=blue,urlcolor=magenta]{hyperref}
\usepackage{multicol}
\usepackage{comment}
\usepackage[margin=1in]{geometry}
\newtheorem{thm}{Theorem}[section]

  \newtheorem{lem}{Lemma}[section]

 \theoremstyle{definition}

 \theoremstyle{remark}

\newtheorem{rem}{Remark}[section]

 \numberwithin{equation}{section}

\allowdisplaybreaks

\newcommand{\f}{\left(}
\renewcommand{\r}{\right)}

\renewcommand{\d}{\delta}

\newcommand{\s}{\sigma}

\begin{document}

\title{Interior Hessian estimates for Hessian quotient equations}

\author{Weisong Dong}
\address{School of Mathematics, Tianjin University,
	Tianjin, 300354, China}
\email{dr.dong@tju.edu.cn}

\author{Ruijia Zhang}
\address{Department of Mathematics, Sun Yat-sen University, Guangzhou, 510275, China}
\email{zhangrj76@mail.sysu.edu.cn}

\keywords{semiconvex, interior estimates}
\subjclass[2020]{35J60, 53C42}


\begin{abstract}
In this paper, we establish interior $C^2$ estimates for admissible semiconvex solutions to the general Hessian quotient equation
$
\frac{\sigma_k}{\sigma_l}(D^2u)=f(x,u),
$
for the cases $l=k-1$ and $l=k-2$, where $f$ is a positive $C^2$ function. Such estimates are known to fail in general for $k-l\geq 3$, even for convex solutions, as shown by counterexamples due to Lu \cite{LuGeneral}. The main ingredient is a quantitative concavity inequality for the Hessian quotient operator under the semiconvex condition. Our result provides a unified argument to such general Hessian quotient equations for $2\leq k\leq n-1$ in arbitrary dimensions.
\end{abstract}

\maketitle

\section{Introduction}\label{sec:1}
We consider a function $u$ satisfying the Hessian quotient equation
\begin{align}\label{eq hq}
F_l(D^2u):=\frac{\sigma_k}{\sigma_l}(D^2u)=f(x,u),
\qquad l=k-1\ \text{or}\ l=k-2,
\end{align}
in the admissible cone, namely
$\lambda(D^2u)\in\Gamma_k$. Our purpose is to prove interior $C^2$ estimates for admissible semiconvex solutions to \eqref{eq hq}, with a general right-hand side $f(x,u)$. Interior Hessian estimates are a central issue in the regularity theory of fully nonlinear elliptic equations. These estimates do not depend on the boundary data. Once such estimates are derived, standard elliptic theory yields higher interior regularity. We refer to \cite{CC} for the general background.

The study of interior $C^2$ estimates goes back to Heinz's work \cite{Heinz} on the Monge--Amp\`ere equation in $\mathbb{R}^2$. In dimensions $n\geq3$, Pogorelov \cite{Pogorelov} constructed singular convex solutions showing that an unconditional interior Hessian estimate for the Monge--Amp`ere equation fails. Urbas \cite{Urbas} extended these counterexamples to the $k$-Hessian equation $\sigma_k(D^2u)=f$, when $k\geq3$. Thus, by contrast, the quadratic Hessian equation occupies a distinguished position. Warren and Yuan \cite{WarrenYuan09} proved the interior Hessian estimate for the constant $\sigma_2$ equation in dimension three, which was later extended by Qiu \cite{Qiu} to general positive right-hand sides. In arbitrary dimensions, McGonagle--Song--Yuan \cite{McGonagleSongYuan} obtained interior Hessian estimates for almost convex solutions of the constant $\sigma_2$ equation by a compactness argument; Guan and Qiu \cite{GuanQiu} derived the estimate for admissible solutions under the additional condition $\sigma_3>-A$ using a pointwise maximum principle. Shankar and Yuan \cite{ShankarYuan20} proved the estimate for semiconvex solutions of the constant $\sigma_2$ equation via an integral method. Later, they \cite{ShankarYuan25} established the estimate without convexity assumption in $\mathbb{R}^4$ and, in higher dimensions, obtained estimates under a dynamic semiconvexity condition. Fan \cite{Fan} extended these results to variable right-hand sides. Very recently, Chen, Jian, Tu, and Zhou \cite{ChenJianTuZhou} proved interior $C^2$ regularity for convex viscosity solutions of $\sigma_2(D^2u)=f(x)$ with $f\in C^{0,1}$.

A closely related source of Hessian estimates is the special Lagrangian equation. In low dimensions, for certain phases, the special Lagrangian structure enters Hessian quotient equations. For the Hessian quotient equation $\frac{\sigma_3}{\sigma_1}(D^2u)=f,$
several interior Hessian estimates are known in dimensions three and four. Chen, Warren, and Yuan \cite{ChenWarrenYuan} proved the estimate for convex solutions when the right-hand side is constant. Wang and Yuan \cite{WangYuan14} later obtained interior estimates on the corresponding elliptic branch in dimensions three and four without assuming convexity. Zhou \cite{Zhou} extended these results to positive Lipschitz right-hand sides depending on $x$. In the curvature setting, Qiu and Zhou \cite{QiuZhou} obtained interior curvature estimates at the critical phase; in dimension four, their result covers admissible solutions of $\sigma_3(\kappa)=\sigma_1(\kappa)$. In dimension three, Lu \cite{LuDimensionThree} gave another proof using a Jacobi inequality and the Legendre transform.

For the general Hessian quotient equation,
$
\frac{\sigma_k}{\sigma_l}(D^2u)=f,$
Lu \cite{LuGeneral} proved interior $C^2$ estimates in the cases $k=n$ and $l=n-1,n-2$; Lu and Tsai \cite{LuTsaiPogorelov} subsequently established Pogorelov-type estimates and sharp conditional regularity results. In \cite{LuGeneral}, Lu constructed singular solutions when $k-l\geq3$. These results identify $l=k-1$ and $l=k-2$ as the natural remaining cases. In this paper, we verify the structural concavity condition introduced by Lu and Tsai \cite{Lu} under the weaker semiconvexity assumption.

\begin{lem}\label{general concavity}
Let $l=k-1$ or $l=k-2$, and let $F=\sigma_k/\sigma_l$. Suppose that
$\lambda\in\Gamma_k^n$, $\lambda_1\geq\cdots\geq\lambda_n\geq-K$, and that
$F$ has a fixed positive lower bound $f_0$. Then there exist large $K_0,K_1\geq1$
and small $\delta,\gamma>0$ such that, whenever
\[
\frac{\lambda_1}{F^{1/(k-l)}}\geq K_0,
\]
every $\zeta\in\mathbb R^n$ satisfies
\begin{equation}\label{c hq}
\begin{aligned}
&-\sum_{i,j=1}^nF^{ii,jj}\zeta_i\zeta_j
+\frac{K_1}{F}\left(\sum_iF^{ii}\zeta_i\right)^2
+\frac{2}{(1+\delta)\lambda_1}\sum_{i>1}F^{ii}\zeta_i^2 \geq
(1+\gamma)\frac{F^{11}\zeta_1^2}{\lambda_1}.
\end{aligned}
\end{equation}
The constants depend only on $n,k,l,K$ and $f_0$.
\end{lem}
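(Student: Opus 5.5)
Write $F=\sigma_k/\sigma_l$, $m=k-l\in\{1,2\}$, and use the logarithmic form $\log F=\log\sigma_k-\log\sigma_l$. Differentiating twice,
\[
\frac{F^{ii,jj}}{F}=\frac{F^{ii}F^{jj}}{F^2}+\frac{\sigma_k^{ii,jj}}{\sigma_k}-\frac{\sigma_k^{ii}\sigma_k^{jj}}{\sigma_k^2}-\frac{\sigma_l^{ii,jj}}{\sigma_l}+\frac{\sigma_l^{ii}\sigma_l^{jj}}{\sigma_l^2}.
\]
Hence the left side of \eqref{c hq} contains the term $(K_1-1)F^{-1}(\sum F^{ii}\zeta_i)^2\ge0$, together with $-F$ times the concave form $Q_k:=\sum(\sigma_k^{ii,jj}/\sigma_k-\sigma_k^{ii}\sigma_k^{jj}/\sigma_k^2)\zeta_i\zeta_j$ (which is $\le0$ on $\Gamma_k$, so $-FQ_k\ge0$ helps), minus $F$ times the analogous form for $\sigma_l$. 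The plan is to reduce \eqref{c hq} to a statement about how large $\zeta_1$ can be relative to the other directions. Then we use the three good terms to absorb the bad term $-(1+\gamma)F^{11}\zeta_1^2/\lambda_1$ together with the $\sigma_l$ contribution.

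\textbf{Step 1: asymptotics when $\lambda_1$ is large.} Using semiconvexity $\lambda_i\ge-K$ and $\lambda_1\ge K_0F^{1/m}$, we derive the standard expansions $\sigma_k\approx\lambda_1\sigma_{k-1}(\lambda|1)$ and $\sigma_l\approx\lambda_1\sigma_{l-1}(\lambda|1)$, with errors controlled by $K$. This gives $F^{11}\approx F\cdot(\text{small})/\lambda_1$; more precisely,
\[
\lambda_1F^{11}=F\Bigl(\frac{\sigma_{k-1}(\lambda|1)\lambda_1}{\sigma_k}-\frac{\sigma_{l-1}(\lambda|1)\lambda_1}{\sigma_l}\Bigr),
\]
which is a difference of two quantities tending to $1$. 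For $i>1$ we get $F^{ii}\gtrsim F^{11}$. The key quantitative fact to establish is that the ratio $\sigma_{k-1}(\lambda|1)/\sigma_{k-2}(\lambda|1)$ of the reduced vector $\lambda'=(\lambda_2,\dots,\lambda_n)$ is comparable to $F\sim f_0$ when $m=1$ (respectively $\sigma_{k-1}/\sigma_{k-3}(\lambda')\sim F$ when $m=2$). This uses $\lambda'\in\Gamma_{k-1}$ and the lower bound $-K$.

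\textbf{Step 2: the $\zeta_1$ direction.} We isolate the terms involving $\zeta_1$. The cross terms $F^{11,jj}\zeta_1\zeta_j$ are handled by writing $\zeta_1=\bigl(\sum F^{ii}\zeta_i-\sum_{i>1}F^{ii}\zeta_i\bigr)/F^{11}$. Substituting, the $K_1$-term controls $\sum_iF^{ii}\zeta_i$, so it suffices to prove
\[
(1+\gamma)\frac{(\sum_{i>1}F^{ii}\zeta_i)^2}{\lambda_1F^{11}}\le\frac{2}{1+\delta}\sum_{i>1}\frac{F^{ii}\zeta_i^2}{\lambda_1}-\sum_{i,j>1}F^{ii,jj}\zeta_i\zeta_j+\text{(absorbable)}.
\]
By Cauchy--Schwarz, $(\sum_{i>1}F^{ii}\zeta_i)^2\le\bigl(\sum_{i>1}F^{ii}\bigr)\sum_{i>1}F^{ii}\zeta_i^2$. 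The whole statement thus reduces to the inequality $\sum_{i>1}F^{ii}\le c\,F^{11}$ with $c<2$, after using the concavity of the reduced operator $-F^{ii,jj}$ restricted to $i,j>1$. This is exactly where $m\le2$ enters: via Step 1 we compute $\sum_{i>1}F^{ii}/F^{11}\to m$ in the regime $\lambda_1\to\infty$ (a Euler-homogeneity computation, since $\sum_i\lambda_iF^{ii}=mF$). For $m=1$ the margin is comfortable. For $m=2$ the limit equals $2$, so the margin must come from the extra positive concavity term $-\sum_{i,j>1}F^{ii,jj}\zeta_i\zeta_j$ together with the $\delta,\gamma$ slack.

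\textbf{Main obstacle.} The hard step is the borderline case $l=k-2$, where the Cauchy--Schwarz bound is sharp to leading order. There I would first try to show, via the Newton--Maclaurin inequality for $\sigma_{k-1}/\sigma_{k-3}$ on $\lambda'$, that the reduced form $-\sum_{i,j>1}F^{ii,jj}\zeta_i\zeta_j$ controls $\lambda_1^{-1}\sum_{i>1}F^{ii}\zeta_i^2$ minus a multiple of $(\sum_{i>1}F^{ii}\zeta_i)^2/F$. The latter is then absorbed by enlarging $K_1$, which yields the needed gap $\gamma$. All constants depend only on $n,k,l,K,f_0$ through Step 1, and $K_0$ is chosen so that the asymptotic errors are below that gap.
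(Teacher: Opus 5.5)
There is a genuine gap: Step 2 rests on an asymptotic that is false. Euler's identity $\sum_i\lambda_iF^{ii}=(k-l)F$ says nothing about the ratio $\sum_{i>1}F^{ii}/F^{11}$. That ratio is never below $n-1$, because concavity and symmetry of $F^{1/(k-l)}$ give $F^{ii}\ge F^{11}$ for all $i$, as you note yourself in Step 1. It typically grows like $\lambda_1^2$.

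For example, take $n=3$, $k=2$, $l=1$, $\lambda=(L,1,1)$. Then $F^{11}=3/(L+2)^2$ and $F^{22}=F^{33}=(L^2+L+1)/(L+2)^2$. On tangent vectors the constraint forces $|\zeta_1|=|\sum_{i>1}F^{ii}\zeta_i|/F^{11}\approx\frac{L^2}{3}|\sum_{i>1}F^{ii}\zeta_i|$. So the bad term $(1+\gamma)F^{11}\zeta_1^2/\lambda_1$ has size $\approx\frac{1+\gamma}{3}L\,(\sum_{i>1}F^{ii}\zeta_i)^2$. Everything retained in your reduced inequality, namely $\frac{2}{(1+\delta)\lambda_1}\sum_{i>1}F^{ii}\zeta_i^2$ and $-\sum_{i,j>1}F^{ii,jj}\zeta_i\zeta_j$, is only $O(L^{-1}|\zeta'|^2)$ here. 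No choice of $\delta,\gamma,K_0$ repairs this, and $k-l\le 2$ does not enter through any limit of $\sum_{i>1}F^{ii}/F^{11}$.

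The same problem affects your passage from general $\zeta$ to tangent ones. Solving for $\zeta_1$ with $t=\sum_iF^{ii}\zeta_i$ produces $(1+\gamma)t^2/(\lambda_1F^{11})$. In this example that is of order $\lambda_1t^2/F$, which the fixed term $K_1t^2/F$ cannot absorb.

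What actually controls the bad term are precisely the terms you dropped, $-F^{11,11}\zeta_1^2-2\sum_{j>1}F^{11,jj}\zeta_1\zeta_j$. When $\lambda_2,\dots,\lambda_n$ stay bounded, $F=(a\lambda_1+b)/(c\lambda_1+d)$ is a M\"obius function of $\lambda_1$. Hence $-F^{11,11}=\frac{2c}{c\lambda_1+d}F^{11}\approx\frac{2}{\lambda_1}F^{11}$, which gives the margin $2>1+\gamma$. In the example with $\zeta'=(1,0)$, one gets $-F^{11,11}\zeta_1^2\approx\frac23L$ against a bad term $\approx\frac{1+\gamma}{3}L$. The genuinely hard regime is when $\lambda_2,\dots,\lambda_{k-1}$ may also be large, which Lemma \ref{lll} allows. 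There the term $\frac{2}{(1+\delta)\lambda_1}\sum_{i>1}F^{ii}\zeta_i^2$ becomes essential, and one needs a Huisken--Sinestrari/Zhang type inequality rather than Cauchy--Schwarz.

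The paper obtains that inequality in three steps. It normalizes $F=1$ and lifts to $\tilde\lambda=(\lambda,k-l-1,-1+s)\in\Gamma_k^{n+2}$, so that $\sigma_k(\tilde\lambda)=\sigma_k-\sigma_l+s(\sigma_{k-1}+(k-l-1)\sigma_{k-2})$; this identity is exactly where $k-l\le2$ is used. It then applies the $\sigma_k$-inequality of Lemma \ref{DXZ} to $\tilde\lambda$ and lets $s\to0$.

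It removes the tangency constraint by splitting along the radial direction rather than along $e_1$: $\zeta=\xi+a\lambda$ with $a=t/((k-l)F)$. Homogeneity gives $\sum_jF^{ii,jj}\lambda_j=(k-l-1)F^{ii}$, so the radial Hessian contribution is exactly $-(k-l)(k-l-1)a^2F$. Lemma \ref{ell} then bounds the remaining errors by $Ca^2F=O(t^2/F)$, which the $K_1$ term absorbs.
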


The concavity inequality in Lemma \ref{general concavity} can be traced back to an algebraic calculation of Huisken and Sinestrari \cite{HS}, which gives a useful recursive structure for elementary symmetric functions. Zhang \cite{Zhang} later developed this idea into a quantitative concavity inequality for the $k$-Hessian operator, providing effective control of third-order terms when the largest eigenvalue is large. Guan and Sroka \cite{GuanSroka} also found a special concavity property for positive Hessian quotient operators with $k=n$. More recently, Dong, Xu and Zhang \cite{DXZ} introduced a lifting argument and extended the $k$-Hessian concavity method to the dynamically semiconvex setting, where negative eigenvalues may still have a significant size. This idea was later used by Mei and Yan \cite{MeiYan} for the semiconvex equations $\sigma_3/\sigma_l=1$, $l=1,2$. Related interior estimates were also obtained by Jiao and Sui \cite{JiaoSui} for $\sigma_2/\sigma_1=\psi(x,u)$, while Fung \cite{FungHQ} developed a pointwise doubling argument for Hessian quotient equations under suitable concavity assumptions. Motivated by these developments, we adapt the lifting idea to $\sigma_k/\sigma_l$, $l=k-1,k-2$, and use a tangential--radial decomposition to remove the restriction to tangent vectors. This gives the main algebraic input for the Jacobi inequality.

Our proof then follows the integral approach originating from Warren and Yuan \cite{WarrenYuan09}, which was later refined by Qiu \cite{QiuAJM} so that the Sobolev inequality is no longer needed. Shankar and Yuan \cite{ShankarYuan20} further developed this method for semiconvex solutions of the $\sigma_2$ equation, while Lu and Tsai \cite{Lu} applied a similar argument to convex Hessian quotient equations. The main difficulty in the semiconvex setting is that a large positive eigenvalue may coexist with negative eigenvalues of non-negligible size. Besides Lemma \ref{general concavity}, we verify the uniform ellipticity of the transformed operator after the Lewy--Legendre transform, which allows us to extend the convex interior estimates to admissible semiconvex solutions in arbitrary dimensions. Our main result is stated below.

\begin{thm}\label{thm main}
    Let $n\geq 3$ and $1\leq l<k\leq n-1$, with $l=k-1$ or $l=k-2$. Let $u\in C^4(B_{9})$ be an admissible solution of \eqref{eq hq}, where $f\in C^2(B_{9}\times \mathbb{R})$ is positive. Assume that $u$ is semiconvex, i.e.
    \[D^2 u\geq -KI\]
    for some $K\geq 0$. Then we have 
    \[|D^2 u(0)|\leq C,\]
where $C$ depends on $n$, $k$, $K$, $\|u\|_{L^{\infty}( B_8)}$, $\min_{\bar B_7\times[-M,M]}f$ and $\|f\|_{C^2(\bar B_7\times[-M,M])}.$ Here $M$ is a large constant satisfying $\|u\|_{L^{\infty}( B_8)}\leq M.$
\end{thm}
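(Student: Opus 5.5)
We follow the integral approach of Warren--Yuan, Qiu and Shankar--Yuan, as adapted by Lu--Tsai to convex Hessian quotient equations. The argument has three steps: a Jacobi inequality for a function of the largest eigenvalue, an integral bound for that function, and a volume (integral Hessian) bound obtained through the Lewy--Legendre transform.

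\textbf{Step 1: Jacobi inequality.} Set $b=\log \lambda_{\max}(D^2u)$, or a smooth approximation of it (for instance a suitable average over repeated eigenvalues). We compute $F^{ij}b_{ij}$ at a point where $D^2u$ is diagonal and $\lambda_1\ge K_0 F^{1/(k-l)}$. Differentiating \eqref{eq hq} twice in the $e_1$ direction gives
\[
F^{ii}u_{11ii} = -F^{ij,rs}u_{ij1}u_{rs1} + D_{11}(f(x,u)).
\]
The third-order terms in $F^{ij}b_{ij}$ are
\[
\frac{1}{\lambda_1}\Big(-F^{ij,rs}u_{ij1}u_{rs1} + 2\sum_{i>1}\frac{F^{ii}u_{1i1}^2}{\lambda_1-\lambda_i}\Big) - \frac{F^{ii}u_{11i}^2}{\lambda_1^2}.
\]
We split the Hessian term of $F$ into its diagonal part $-F^{ii,jj}u_{ii1}u_{jj1}$ and its off-diagonal part. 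The off-diagonal part is nonnegative and we drop it. The diagonal part is handled with Lemma~\ref{general concavity} applied to $\zeta_i=u_{ii1}$. Since $\sum_i F^{ii}\zeta_i = D_1 f$, the term $\frac{K_1}{F}(\sum_i F^{ii}\zeta_i)^2$ is bounded. Since $\lambda_i\ge -K$, we have $\frac{2}{\lambda_1-\lambda_i}\ge \frac{2}{(1+\delta)\lambda_1}$ once $\lambda_1$ is large, which matches the third term of \eqref{c hq}.

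The lemma then gives $(1+\gamma)F^{11}u_{111}^2/\lambda_1$, which absorbs the $i=1$ piece of $F^{ii}u_{11i}^2/\lambda_1^2$ with room $\gamma$ to spare. The terms with $i>1$ are absorbed using the symmetry $u_{11i}=u_{1i1}$ together with the extra slack from the factor $1+\delta$. The lower-order terms from $f_{x}$, $f_u$ and $f_{uu}$ are controlled by $C\sum_i F^{ii}$, which dominates $c(n,k,l,f_0)>0$.

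The result is the Jacobi inequality
\[
F^{ij}b_{ij}\ge \varepsilon F^{ij}b_ib_j - C\sum_i F^{ii}
\]
wherever $\lambda_1$ is large. Where $\lambda_1$ is bounded the estimate is trivial, and we handle that region by replacing $b$ with $\max(b,\log K_0)$, or by a cutoff.

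\textbf{Step 2: Integral bound.} Following Qiu's refinement, which avoids the Sobolev inequality, we multiply the Jacobi inequality by cutoff functions and powers of $b$ and integrate by parts. The coefficients $F^{ij}$ are not divergence free, so we use the divergence structure of $\sigma_k$ and $\sigma_l$ separately, as in Lu--Tsai, or absorb the commutator terms using $\sum_i F^{ii}\le C$ together with the semiconvexity. This reduces $b(0)$ to a bound on $\int_{B_r} b\,\sigma_1\,dx$ and on integrals of the form $\int F^{ij}u_iu_j$, with $u$ bounded in $L^\infty(B_8)$; the latter give a gradient bound by semiconvexity.

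\textbf{Step 3: Volume bound, the main obstacle.} Integrating $\Delta u$ is easy, but we need a bound for $\int b\,\Delta u$, and in the convex case this is obtained by integrating the Jacobi inequality. We expect the real difficulty to be this integral, together with the use of the Lewy--Legendre transform $\tilde x = x+ Du/K'$ with $K'>K$ that Lu--Tsai and Shankar--Yuan employ to turn the semiconvex problem into one with bounded Hessian.

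Concretely, set $w=\frac{K'}{2}|x|^2+u$, so that $D^2w\ge (K'-K)I>0$, and take its Legendre transform $w^*$. Then $D^2w^*=(K'I+D^2u)^{-1}$ lies between $0$ and $(K'-K)^{-1}I$. The eigenvalues of $D^2u$ satisfy $\lambda_i=\mu_i^{-1}-K'$, where $\mu_i$ are the eigenvalues of $D^2w^*$. The transformed equation $G(D^2w^*)=\frac{\sigma_k}{\sigma_l}(\mu^{-1}-K')=f$ must be shown to be uniformly elliptic on the range $\mu\in(0,(K'-K)^{-1}]$, including as $\mu_1\to0$. This amounts to checking that $\mu_i^{-2}F^{ii}$ stays bounded above and below. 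For $l=k-1,k-2$ we would check it with the homogeneity of $\sigma_k/\sigma_l$ of degree $k-l\le2$ and the bound $F\ge f_0$, using the explicit asymptotics of $F^{ii}$ as $\lambda_1\to\infty$.

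Given uniform ellipticity, the equation for $w^*$ yields $W^{2,\varepsilon}$ or $W^{2,p}$-type control. Transforming back, this bounds the measure of the set where $\lambda_1$ is large, and hence $\int b\,\sigma_1$. Combining the three steps gives $|D^2u(0)|\le C$.
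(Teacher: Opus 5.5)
Your Step 1 follows the paper's route: the concavity inequality with $\zeta_i=u_{ii1}$, and the $K_1$-term bounded through $D_1f$. After that there is a genuine gap. Nothing in your plan converts the pointwise value $b(0)$ into an integral.

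\textbf{The missing mean value inequality.} In Step 2 you multiply the Jacobi inequality by cutoffs and powers of $b$ and integrate by parts. That is a Moser iteration, and it needs a Sobolev inequality adapted to the weights $F^{ij}$. None is available here: there is no minimal-surface structure, and the $F^{ii}$ degenerate roughly like $(\lambda_i+K+1)^{-2}$. The way to avoid Sobolev is not integration by parts. It is the local maximum principle for a uniformly elliptic nondivergence operator, and that is exactly where the Lewy--Legendre transform is needed. In the paper one sets $y=Du+(K+1)x$ and $b^*(y)=b(x)$. The chain rule turns the Jacobi inequality into
\[
G^{ii}b^*_{ii}\ \ge\ \varepsilon G^{ii}(b^*_i)^2-\sum_j f_jb^*_j-C\ \ge\ -C,\qquad G^{ii}=F^{ii}(\lambda_i+K+1)^2 .
\]
Here the gradient term absorbs $f_jb^*_j$ because $\sum_i(G^{ii})^{-1}\le C$. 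By Lemma~\ref{ell}, $G$ is uniformly elliptic after division by a positive normalizing factor. The local maximum principle then gives $b(0)\le C\int_{B^y_1}b^*\,\mathrm{d}y+C$. Changing variables back introduces the weight $\det(D^2u+(K+1)I)\le C\sigma_{k-1}+C$, using $\lambda_k\le C$ from Lemma~\ref{lll}. The result is $b(0)\le C\int_{B_1}b\,\sigma_{k-1}\,\mathrm{d}x+C$, so the weight is $\sigma_{k-1}$, not $\sigma_1$.

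\textbf{Step 3 uses the transform for something it cannot do.} Since $0<D^2w^*\le (K'-K)^{-1}I$ a priori, any $W^{2,\varepsilon}$ or $W^{2,p}$ bound on $w^*$ carries no information. Large $\lambda_1$ corresponds to the degeneration $\mu_1\to0$, which upper bounds on $D^2w^*$ cannot exclude. Even a decay bound $|\{\lambda_1>t\}|\le Ct^{-\varepsilon}$ would not make $b\,\sigma_1$ integrable.

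In the paper the integral quantities are handled by integration by parts with the divergence-free Newton tensors. One has $\int b\,\sigma_{k-1}\le C\int b\,\sigma_{k-2}+C\int|b_i\sigma^{ii}_{k-1}|$, and this is iterated downward. The gradient terms are controlled by $(\sigma_j^{ii})^2\le CH^{ii}\sigma_{k-1}$ (Lemma~\ref{alg}), where $H^{ij}=\sigma_lF^{ij}=\sigma_k^{ij}-f\sigma_l^{ij}$, together with Cauchy--Schwarz against $\int H^{ij}b_ib_j$. That energy is bounded by $C\int\sigma_{k-1}$: test the $\sigma_l$-weighted Jacobi inequality with $\phi^2$ and use $D_jH^{ij}=-f_j\sigma_l^{ij}$. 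So the ``volume'' part you call the main obstacle is the routine part.

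\textbf{Smaller points.}
\begin{itemize}
\item In Step 1 you cannot drop the whole off-diagonal part of $-F^{ij,rs}u_{ij1}u_{rs1}$. The $(1,i)$ entries $2\sum_{i>1}\frac{F^{ii}-F^{11}}{\lambda_1-\lambda_i}u_{11i}^2$ are what supply $F^{ii}u_{11i}^2$. Without them you only have $F^{11}u_{11i}^2$, and typically $F^{11}\ll F^{ii}$.
\item The slack between $\frac{2}{\lambda_1-\lambda_i}$ and $\frac{2}{(1+\delta)\lambda_1}$ multiplies $F^{ii}u_{ii1}^2$, not $F^{ii}u_{11i}^2$. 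So it cannot absorb the $u_{11i}$ terms.
\item For $l=k-2$, neither $\sum_iF^{ii}\le C$ nor ``$\mu_i^{-2}F^{ii}$ bounded above and below'' holds. Both are comparable to $\sigma_{k-1}/\sigma_k$, which is unbounded when $\lambda_{k-1}$ is large. Only the normalized operator is uniformly elliptic.
\end{itemize}
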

\begin{rem}
The case $n=3$, $k=2$, and $l=1$ was already resolved by Jiao and Sui \cite{JiaoSui}; for $k=3$ and $f=1$, the corresponding semiconvex interior estimates were obtained by Mei and Yan \cite{MeiYan}. Most recently, for $l=k-1$  the convex case was settled by Tsai \cite{TsaiConcavity}, where he proved the required concavity inequality, \eqref{c hq}  by a change of basis for symmetric polynomials; for $l=k-2$, Li and Wu \cite{LW} derived the results by establishing \eqref{c hq} with $l=k-1,\ k-2$ in the convex cone via a different approach.
\end{rem}

The paper is organized as follows. In Section \ref{sec:2}, we collect and prove some algebraic properties of Hessian quotient functions, and verify Lemma \ref{general concavity}. In Section \ref{sec:3}, we establish the crucial Jacobi inequality, perform the Lewy--Legendre transform, and derive the weighted mean-value estimate. In Section \ref{sec:4}, we complete the proof of Theorem \ref{thm main} by integration by parts.

\section{Preliminaries}\label{sec:2}
\subsection{Symmetric function}
Here we state some algebraic properties of the elementary symmetric functions $\s_m(\lambda)$, $m=1,\cdots,n$, where $ \lambda=( \lambda_1,\cdots, \lambda_n)$. Recall that $\s_m(\lambda)$ is defined by
	\begin{equation*}
	\s_m(\lambda)=\sum_{1\leqslant i_1<\cdots<i_m \leqslant n}\lambda_{i_1}\cdots\lambda_{i_m}, \quad m=1,2, \cdots,n.
	\end{equation*}
	The Garding cone $\Gamma_m$ is an open symmetric convex cone in $\mathbb{R}^n$ with vertex at the origin, given by
	\begin{align}\label{gk}
	\Gamma_m=\lbrace\f \lambda_1,\cdots, \lambda_n\r\in\mathbb{R}^n |\s_j(\lambda)>0,  \forall j= 1,\cdots,m\rbrace.
	\end{align}
	Clearly $\s_m(\lambda)=0$ for $\lambda \in\partial \overline{\Gamma}_m$ and
	\begin{align*}
	\Gamma_n\subset\cdots\subset {\Gamma}_m\subset\cdots\subset\Gamma_1.
	\end{align*}
	In particular, $\Gamma^+ = \Gamma_n$ is called the positive cone,
	\begin{align*}
	\Gamma^+=\lbrace\f \lambda_1,\cdots, \lambda_n\r\in\mathbb{R}^n |\lambda_1>0,\cdots,\lambda_n>0\rbrace.
	\end{align*}
	We always assume $\lambda_1\geq\lambda_2\geq\cdots\geq \lambda_n$. We collect some properties of $\s_k(\lambda)$ where $\lambda\in\Gamma_k$:
 \begin{enumerate}[(1)]
     \item\label{P1} $\sum_{p=1}^n\frac{\partial \s_k}{\partial \lambda_p}\lambda_p^2=\s_1\s_k-(k+1)\s_{k+1}$. 
     \item\label{P2} $\sum_{p=1}^n \frac{\partial \s_k}{\partial \lambda_p}=(n-k+1)\sigma_{k-1}.$
     \item\label{lem'} $ \frac{\partial \s_k}{\partial \lambda_n}\geq \cdots\geq \frac{\partial \s_k}{\partial \lambda_1}>0.$
     \item\label{k1} For any $s<k$, $\s_s>\lambda_1\lambda_2\cdots\lambda_s$.
 \end{enumerate}

 We point out that we will denote $C$, $c$ or $C_i$ and $c_i$ for $i=1,2,\dots,n$ as some positive constants throughout the subsequent sections, and these constants may change from line to line.
\begin{lem}\label{lemn}\cite{Zhang}
   Assume that $\lbrace\lambda_i\rbrace\in \Gamma_k$ and $\lambda_1\geq\lambda_2\geq \cdots\geq \lambda_n$. Then $\lambda_k>0$ and $|\lambda_n|<(n-k+1)\lambda_k.$  
 \end{lem}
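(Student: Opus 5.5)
We need to prove Lemma \ref{lemn}: for $\lambda\in\Gamma_k$ ordered decreasingly, $\lambda_k>0$ and $|\lambda_n|<(n-k+1)\lambda_k$.

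For $\lambda_k>0$ we argue by contradiction. Suppose $\lambda_k\le 0$, so $\lambda_k,\dots,\lambda_n\le 0$ and at most $k-1$ entries are positive. We use the standard fact that $\Gamma_k$ is invariant under $\lambda\mapsto\lambda+t e_i$ for $t\ge0$, which follows from property (3) on $\partial\sigma_m/\partial\lambda_i>0$ together with the convexity of the cone. We also use that if $\lambda\in\Gamma_k$, then $\lambda$ with any single entry removed lies in $\Gamma_{k-1}$ of $\mathbb{R}^{n-1}$; this follows from $\sigma_{m}(\lambda|i)=\partial\sigma_{m+1}/\partial\lambda_i>0$ for $m\le k-1$. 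Applying this repeatedly, removing the nonpositive entries $\lambda_{k+1},\dots,\lambda_n$ one at a time, gives $(\lambda_1,\dots,\lambda_k)\in\Gamma_{k-(n-k)}$, which is too lossy. Instead we argue directly. Since each $\lambda_j\le 0$ for $j\ge k$ and the cone is monotone in each entry, $\sigma_k(\lambda)>0$ is still required after raising entries. The cleanest route: raise each $\lambda_j$ with $j\ge k$ to $0$ (monotonicity keeps us in $\Gamma_k$, or at least in its closure). Then at most $k-1$ entries are nonzero, so $\sigma_k=0$. But raising entries strictly increases $\sigma_k$ along the segment within $\Gamma_k$, so the original $\sigma_k\le 0$. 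This contradicts $\lambda\in\Gamma_k$. Hence $\lambda_k>0$.

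For the bound we may assume $\lambda_n<0$, since otherwise $|\lambda_n|=\lambda_n\le\lambda_k<(n-k+1)\lambda_k$. The idea is to use $\sigma_{k-1}$-type information on the truncated vector. Since $\partial\sigma_k/\partial\lambda_n=\sigma_{k-1}(\lambda|n)$ and, by induction, $\lambda|n\in\Gamma_{k-1}$, expansion gives $\sigma_k(\lambda)=\lambda_n\sigma_{k-1}(\lambda|n)+\sigma_k(\lambda|n)>0$, so
\[
|\lambda_n|<\frac{\sigma_k(\lambda|n)}{\sigma_{k-1}(\lambda|n)}.
\]
It remains to bound this quotient by $(n-k+1)\lambda_k$. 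Set $\mu=(\lambda_1,\dots,\lambda_{n-1})$. We prove by induction on dimension the claim that $\sigma_k(\mu)/\sigma_{k-1}(\mu)\le c\,\mu_k$ for $\mu\in\Gamma_{k-1}$ with $\sigma_k(\mu)>0$. An easier route uses the Newton–Maclaurin-type inequality for the ratio in $\Gamma_{k-1}$: the ratio $\sigma_k/\sigma_{k-1}$ is concave and increasing in each entry on the region where it is defined. Raising $\mu_1,\dots,\mu_{k-1}$ to $+\infty$ gives the limit $\sigma_k/\sigma_{k-1}\to \sigma_1(\mu_k,\dots,\mu_{n-1})$. We need the quotient to be increasing in $\mu_1$, and this holds since $\partial_{\mu_1}(\sigma_k/\sigma_{k-1})=\sigma_{k-1}(\mu|1)^2-\sigma_k(\mu|1)\sigma_{k-2}(\mu|1)\ge0$ divided by $\sigma_{k-1}^2$, by Newton's inequality. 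Then
\[
\frac{\sigma_k(\mu)}{\sigma_{k-1}(\mu)}\le \mu_k+\dots+\mu_{n-1}\le (n-k)\lambda_k,
\]
so $|\lambda_n|<(n-k)\lambda_k\le(n-k+1)\lambda_k$.

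The main obstacle is justifying the monotonicity limit carefully. We need the domain $\sigma_{k-1}(\mu)>0$ to be preserved as $\mu_1$ increases, which holds since $\Gamma_{k-1}$ is monotone, and we need the limit computation. The limit follows from $\sigma_j(\mu)=\mu_1\sigma_{j-1}(\mu|1)+\sigma_j(\mu|1)$, iterated $k-1$ times. In the second part, if $\sigma_k(\mu)\le 0$, the bound is trivial, since the inequality $\sigma_k(\lambda)>0$ with $\lambda_n<0$ then fails outright.
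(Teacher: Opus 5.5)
Your proof is correct. The paper gives no proof of this lemma and only cites Zhang, so there is no in-paper argument to match. It is still worth comparing with the standard two-line proof, which uses the tool you set aside.

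The fact you invoke, $\sigma_m(\lambda|i)=\partial\sigma_{m+1}/\partial\lambda_i>0$ on $\Gamma_{m+1}$, shows that deleting one entry maps $\Gamma_m$ into $\Gamma_{m-1}$. Deleting the nonpositive entries is indeed too lossy. Deleting the $k-1$ \emph{largest} entries $\lambda_1,\dots,\lambda_{k-1}$ instead gives $(\lambda_k,\dots,\lambda_n)\in\Gamma_1$, i.e. $\lambda_k+\cdots+\lambda_n>0$ (equivalently, $\partial^{k-1}\sigma_k/\partial\lambda_1\cdots\partial\lambda_{k-1}>0$). Since $\lambda_k$ is the largest of these entries, $\lambda_k>0$. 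If $\lambda_n<0$, then $-\lambda_n<\lambda_k+\cdots+\lambda_{n-1}\le(n-k)\lambda_k$.

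Your second part reaches exactly this quantity: the limit $\sigma_1(\mu_k,\dots,\mu_{n-1})$ of $\sigma_k/\sigma_{k-1}$ as $\mu_1,\dots,\mu_{k-1}\to\infty$ is the same sum. Your route is valid. It goes through $|\lambda_n|<\sigma_k(\lambda|n)/\sigma_{k-1}(\lambda|n)$ and the coordinatewise monotonicity of $\sigma_k/\sigma_{k-1}$ on $\Gamma_{k-1}$, which follows from $\sigma_{k-1}(\mu|i)^2\ge\sigma_k(\mu|i)\sigma_{k-2}(\mu|i)$ (Newton, valid for all real vectors). This gives a finer intermediate bound. The cost is Newton's inequality, a limit computation, and a separate contradiction argument for $\lambda_k>0$, none of which the deletion argument needs.

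One small point: in the case $\lambda_n\ge 0$ you use $\lambda_k<(n-k+1)\lambda_k$, which requires $k<n$. This is a defect of the statement, not of your argument. For $k=n$ one has $\lambda_n=\lambda_k>0$, so $|\lambda_n|=(n-k+1)\lambda_k$ and the strict inequality fails. The lemma should be read with $k\le n-1$, as in Theorem \ref{thm main}, or with a non-strict inequality.
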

\begin{lem}\label{lll}
    Assume that $\lambda\in \Gamma_k$ and $\lambda_n>-K$. If $c_0\leq \frac{\s_k}{\s_l}\leq C_0$ with $l=k-1$ or $l=k-2$, then we have 
    \[ \lambda_{l+1}\geq c,\quad \lambda_k\leq C\]
    where the constants depend on $n$, $k$, $c_0$, $C_0$ and $K$.
 \end{lem}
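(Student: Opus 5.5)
We prove the two bounds separately. Throughout, $C,c$ denote constants depending only on $n,k,c_0,C_0,K$. Since $\lambda\in\Gamma_k$, Lemma \ref{lemn} gives $\lambda_k>0$, and $\lambda_n>-K$ controls every negative entry.

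\emph{Upper bound $\lambda_k\le C$.} We compare $\s_k$ and $\s_l$ with the products of the largest eigenvalues. Write $\lambda=(\lambda',\lambda'')$ with $\lambda'=(\lambda_1,\dots,\lambda_k)$. Expanding,
\[
\s_k(\lambda)=\sum_{j}\s_{k-j}(\lambda')\,\s_j(\lambda'').
\]
The aim is to show
\[
\s_k\ge c\,\lambda_1\cdots\lambda_k \quad\text{once } \lambda_k\ge C(n,k,K).
\]
The terms involving negative entries are at most $K$ times products of $k-1$ of the $\lambda_i$ with $i\le k$; the positive entries below $\lambda_k$ only help, or can be absorbed using the ordering. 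For the denominator, we have
\[
\s_l\le C\lambda_1\cdots\lambda_l\le C\lambda_1\cdots\lambda_{k-1}.
\]
Hence
\[
C_0\ge \frac{\s_k}{\s_l}\ge c\,\lambda_k\ (\text{if } l=k-1), \qquad C_0\ge c\,\lambda_{k-1}\lambda_k\ge c\,\lambda_k^2\ (\text{if } l=k-2).
\]
Either way $\lambda_k\le C$.

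An alternative route is to use the Maclaurin/Newton-type fact that $(\s_k/\s_l)^{1/(k-l)}$ is concave and increasing on $\Gamma_k$. Comparing with the vector $(\lambda_k,\dots,\lambda_k,\lambda_{k+1},\dots)$ is messier, so we plan the direct expansion.

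\emph{Lower bound $\lambda_{l+1}\ge c$.} This is the harder direction, because $\s_k/\s_l$ could a priori be small only through smallness of the quotient, not of $\lambda_{l+1}$. We argue by contradiction using the identity
\[
\s_k=\lambda_1\cdots\lambda_l\,\s_{k-l}(\lambda_{l+1},\dots,\lambda_n)+(\text{lower terms}),
\]
and more concretely the bound
\[
\s_k\le C\,\s_l\cdot\max(\lambda_{l+1},\lambda_{l+1}^{k-l}) + C\cdot(\text{terms})
\]
obtained as follows. Each monomial of $\s_k$ uses at least $k-l$ indices $\ge l+1$. The positive such factors are at most $\lambda_{l+1}$, and the negative ones are bounded by $K$ while contributing to the sign.

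Suppose $\lambda_{l+1}\to0$. Then $\lambda_{l+1},\dots,\lambda_k\in(0,\lambda_{l+1}]$, and the indices beyond $k$ lie in $[-K,\lambda_{l+1}]$. The key step is to show that
\[
\s_k\le C\lambda_{l+1}\,\s_l .
\]
For this we would write
\[
\s_k=\sum_{|A|=l,\ A\subset\{1..l\}\text{-dominant}}\cdots,
\]
or more cleanly use $\s_k(\lambda)=\s_k(\lambda|1\cdots l)+\dots$ (the standard decomposition by removing the top $l$ entries), together with the fact that $\s_l\ge c\lambda_1\cdots\lambda_l$. This last fact follows from property \eqref{k1}-type arguments together with $\lambda\in\Gamma_k\subset\Gamma_{l+1}$; it is the Lemma \ref{lemn}-style inequality $\s_{l}\ge \lambda_1\cdots\lambda_l$.

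The \textbf{main obstacle} is the negative entries: monomials containing an even number of negative $\lambda_j$ are positive and of size $K^2\lambda_1\cdots\lambda_{k-2}$, which is not small relative to $\lambda_{l+1}\s_l$. We handle this as follows:
\begin{enumerate}[(a)]
\item Use $\s_k(\lambda)=\lambda_i\s_{k-1}(\lambda|i)+\s_k(\lambda|i)$ repeatedly on $i=1,\dots,l$. This expresses $\s_k$ as $\lambda_1\cdots\lambda_l\,\s_{k-l}(\lambda|1..l)$ plus terms each containing a factor $\s_m(\lambda|1..l)$ with $m>k-l$.
\item Use the admissibility of $(\lambda|1..l)$ in $\Gamma_{k-l}$ together with the bounds $0<\lambda_j\le\lambda_{l+1}$ and $|\lambda_j|\le K$. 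For $k-l\le2$, these show $\s_{k-l}(\lambda|1..l)\le C\lambda_{l+1}$. For $k-l=1$ this is immediate. For $k-l=2$, it follows from
\[
2\s_2=\s_1^2-|\lambda|^2\le \s_1^2\quad\text{with}\quad \s_1(\lambda|1..l)\le n\lambda_{l+1}.
\]
\end{enumerate}

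This is exactly where the restriction $l\ge k-2$ enters. It yields $F\le C\lambda_{l+1}$, and hence $\lambda_{l+1}\ge c_0/C$.
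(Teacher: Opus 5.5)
Your upper bound $\lambda_k\le C$ is fine and is essentially the paper's argument. Once $\lambda_k\ge C(n,k,K)$, every monomial of $\sigma_k$ with a negative entry is at most $K\lambda_1\cdots\lambda_{k-1}$ in absolute value. Hence $\sigma_k\ge\frac12\lambda_1\cdots\lambda_k$, while $\sigma_l\le C\lambda_1\cdots\lambda_l$. The paper phrases this as $\sigma_k\ge\lambda_1\cdots\lambda_{k-1}(\lambda_k+c_{n,k}\lambda_n)$.

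The lower bound $\lambda_{l+1}\ge c$ has a genuine gap.
\begin{itemize}
\item Step (a) writes $\sigma_k=\sum_{j\le l}\sigma_j(\lambda_1,\dots,\lambda_l)\,\sigma_{k-j}(\lambda|1\cdots l)$, but step (b) only estimates the top term $j=l$.
\item The terms with $j<l$ contain factors $\sigma_m(\lambda|1\cdots l)$, $m>k-l$, and you never bound them. They are exactly the terms with products of two negative entries that you flag as the main obstacle.
\item Using only $|\lambda_j|\le K$, you cannot show they are $O(\lambda_{l+1}\sigma_l)$. So the key claim $\sigma_k\le C\lambda_{l+1}\sigma_l$ is not established.
\end{itemize}

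The missing ingredient is the second half of Lemma \ref{lemn}, which you quoted only for $\lambda_k>0$:
\[ |\lambda_n|<(n-k+1)\lambda_k\le(n-k+1)\lambda_{l+1}. \]
Equivalently, admissibility of $(\lambda|1\cdots l)$ in $\Gamma_1$ gives $|\lambda_n|\le(n-l)\lambda_{l+1}$. With this, the obstacle disappears. In any monomial $\lambda_{i_1}\cdots\lambda_{i_k}$ with $i_1<\dots<i_k$, each factor satisfies $|\lambda_{i_j}|\le C\lambda_j$, so $\sigma_k\le C\lambda_1\cdots\lambda_k$. Combined with property (4), $\sigma_l>\lambda_1\cdots\lambda_l$, this gives directly
\[ c_0\le \frac{\sigma_k}{\sigma_l}\le C\lambda_{l+1}\cdots\lambda_k\le C\lambda_{l+1}^{k-l}, \]
which is the paper's proof. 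No contradiction argument is needed, and the bound holds for every $l<k$. In particular, your remark that this is ``exactly where the restriction $l\ge k-2$ enters'' is incorrect: that restriction plays no role in this lemma.
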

\begin{proof}
For $l=k-2$, since $\sigma_k=f\sigma_{k-2}$ and $f$ has fixed positive upper and lower bounds,
\[\lambda_1\dots\lambda_{k-1}(\lambda_k+c_{n,k}\lambda_n)\leq \s_k=f\s_{k-2}\leq C\lambda_1\dots\lambda_{k-2},\]
we have $\lambda_k\leq C(n,k,K).$
Using Lemma \ref{lemn}, we have
 \[C\lambda_1\dots\lambda_{k-1}\lambda_k\geq \s_k=f\s_{k-2}\geq c\lambda_1\dots\lambda_{k-2},\]
 Thus 
 \begin{align}\label{k-1 k}
     \lambda_{k-1}\geq c(n,k,c_0,C_0, K),\quad \lambda_k\leq C(n,k,c_0,C_0, K).
 \end{align}
By a similar argument, using $\sigma_k=f\sigma_{k-1}$, we obtain $c\leq \lambda_k\leq C$ for $l=k-1$.
\end{proof}
After the Lewy--Legendre transform in Section \ref{sec:3}, we need the operator to be uniformly elliptic, which is a purely algebraic property. We therefore state it here. We also introduce the following notation,
\begin{equation}\label{G coefficient}
 G^{ii}=F^{ii}(\lambda_i+K+1)^2.
\end{equation}
\begin{lem}\label{ell}
For $l=k-1$ or $l=k-2$, $G^{ii},\forall 1\leq i\leq n$ is uniformly elliptic after a proper normalization, i.e.,
\[
 c\leq G^{ii}\leq C,\qquad l=k-1,
\]
and
\[
 c\frac{\sigma_{k-1}}{\sigma_k}\leq G^{ii}\leq
 C\frac{\sigma_{k-1}}{\sigma_k},\qquad l=k-2.
\]
where the constants depend only on $n,k,K$ and the positive upper and lower bounds for
$f$.
\end{lem}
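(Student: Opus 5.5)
The proof reduces $F^{ii}$ to quotients of symmetric functions of $\lambda$ with the $i$-th entry deleted. It then uses Lemma \ref{lll} to split the indices into a ``large'' group and a ``bounded'' group, and compares each quotient with $(\lambda_i+K+1)^{-2}$, up to the factor $\sigma_{k-1}/\sigma_k$ when $l=k-2$.

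Write $\sigma_m(i)=\sigma_m(\lambda|i)$ for the $m$-th elementary symmetric function of $\lambda$ with $\lambda_i$ removed. Substitute $\sigma_m=\lambda_i\sigma_{m-1}(i)+\sigma_m(i)$ into
\[
F^{ii}=\frac{\sigma_{k-1}(i)\sigma_l-\sigma_k\sigma_{l-1}(i)}{\sigma_l^2}.
\]
The $\lambda_i$-terms cancel, giving
\[
F^{ii}=\frac{\sigma_{k-1}(i)\sigma_l(i)-\sigma_k(i)\sigma_{l-1}(i)}{\sigma_l^2}.
\]
For $l=k-1$ the numerator is $\sigma_{k-1}(i)^2-\sigma_k(i)\sigma_{k-2}(i)$. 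Since $(\lambda|i)\in\Gamma_{k-1}$, the Newton inequality in $n-1$ variables yields a $\theta=\theta(n,k)>0$ with
\[
\theta\,\sigma_{k-1}(i)^2\le \sigma_{k-1}(i)^2-\sigma_k(i)\sigma_{k-2}(i)\le C\sigma_{k-1}(i)^2.
\]
The upper bound needs control of $|\sigma_k(i)|\sigma_{k-2}(i)$, which I would obtain from the same size estimates below. For $l=k-2$ the numerator is $\sigma_{k-1}(i)\sigma_{k-2}(i)-\sigma_k(i)\sigma_{k-3}(i)$, and I would treat it the same way: by Newton--Maclaurin it is comparable to $\sigma_{k-1}(i)\sigma_{k-2}(i)$. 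So in both cases everything reduces to two-sided bounds on $\sigma_m(i)$ for $m=k-1$ and $m=k-2$.

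Next I fix the sizes. By Lemma \ref{lll}, $\lambda_{l+1}\ge c$ and $\lambda_k\le C$; semiconvexity gives $\lambda_j\ge -K$. Hence for $j\ge k$ we have $1\le\lambda_j+K+1\le C$, while $\lambda_1,\dots,\lambda_{l}$ may be large. The target comparisons are
\[
\sigma_{m}(i)\approx \frac{\sigma_{m+1}}{\lambda_i+K+1}\quad(i\le l),
\qquad
\sigma_m(i)\approx\sigma_m\quad(i\ge k),
\]
together with $\sigma_m\approx\lambda_1\cdots\lambda_m$ for $m\le l$. The upper bounds come from expanding $\sigma_m(i)$ into monomials and bounding every factor $\lambda_j$ with $j\ge k$ by $C$. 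For the lower bounds I would use the standard facts valid in $\Gamma_k$:
\begin{itemize}
\item $\sigma_{m}(i)\ge c(n,k)\,\sigma_{m}$ for $i\ge k$ and $m\le k-1$;
\item $\lambda_i\sigma_{m-1}(i)\ge c\,\sigma_m$ for $i\le m$.
\end{itemize}
Both follow from the ordering property \eqref{lem'} and Lemma \ref{lemn}.

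Inserting these into the formula for $F^{ii}$ gives the claimed bounds. For $l=k-1$, $F^{ii}\approx \sigma_{k-1}(i)^2/\sigma_{k-1}^2\approx(\lambda_i+K+1)^{-2}$, so $c\le G^{ii}\le C$. For $l=k-2$, $F^{ii}\approx \sigma_{k-1}(i)\sigma_{k-2}(i)/\sigma_{k-2}^2$. Using $\sigma_k=f\sigma_{k-2}$, this becomes $\approx(\lambda_i+K+1)^{-2}\,\sigma_{k-1}/\sigma_k$ up to constants. The point is that $\sigma_{k-1}(i)$ carries the extra factor $\sigma_{k-1}/\sigma_{k-2}\approx\lambda_{k-1}$ when $i\ne k-1$, which is exactly why $\sigma_{k-1}/\sigma_k$ appears. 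The borderline index $i=k-1$, where $\lambda_{k-1}$ may or may not be large, I would handle separately by checking both regimes.

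I expect the main obstacle to be the lower bounds on $\sigma_m(i)$ when negative eigenvalues of size up to $K$ are present. There the lower-order monomials are not small relative to the leading product when the middle eigenvalues $\lambda_{l+1},\dots,\lambda_{k-1}$ are only bounded, so a naive ``leading term dominates'' argument fails. I would first try the cone inequalities above, which avoid expansions entirely, and fall back on a case split according to whether $\lambda_{l+1}\ge C_*K$ for a large constant $C_*$.
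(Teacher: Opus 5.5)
\textbf{There is a genuine gap, in the upper bound for $l=k-2$.} The lower bounds are not the problem. Newton's inequality for $(\lambda|i)$ and your two cone facts do give $G^{ii}\ge c$ for $l=k-1$ and $G^{ii}\ge c\,\sigma_{k-1}/\sigma_k$ for $l=k-2$. (The case $m=k$ of your second fact, $\lambda_i\sigma_{k-1}(i)\ge c\sigma_k$ for $i\le k$, is best proved by splitting on the sign of $\sigma_k(i)$, as in the paper's Cases (a)/(b).)

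The failing step is your claim that the upper bounds follow from expanding into monomials and bounding the entries of index $\ge k$ by $C$. For $l=k-2$ and $i\le k-1$ you need $\sigma_{k-1}(i)\le C\sigma_k/\lambda_i$. The monomial expansion only gives $\sigma_{k-1}(i)\le C\lambda_1\cdots\lambda_{k-1}/\lambda_i$, while $\sigma_k=f\sigma_{k-2}$ is comparable to $\lambda_1\cdots\lambda_{k-2}$. You lose a factor $\lambda_{k-1}$, and Lemma \ref{lll} gives no upper bound on $\lambda_{k-1}$ when $l=k-2$.

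This loss really occurs. Take $n=4$, $k=3$, $l=1$ and $\lambda=(M,M,1,-1+\epsilon)$ with $\epsilon=4M/(M^2+2M-1)$, so that $\sigma_3=\sigma_1$. Then $\sigma_2(1)=M\epsilon-1+\epsilon\to 3$, while the leading monomial is $\lambda_2\lambda_3=M$. Your bound yields only $G^{11}\le CM^2$, whereas in fact $G^{11}\approx M\approx 2\sigma_2/\sigma_3$. The same loss appears at the index $i=k-1$ that you deferred, so checking both regimes with monomial bounds will not close it. Your target comparisons are correct; it is the method for the upper half of $\sigma_{k-1}(i)\approx\sigma_k/(\lambda_i+K+1)$ that fails. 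This, not the lower bound, is the real obstacle.

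\textbf{The missing idea} is that the equation forces a cancellation inside $\sigma_{k-1}(i)$. You capture it through the identity $\lambda_i\sigma_{k-1}(i)=\sigma_k-\sigma_k(i)$.
\begin{enumerate}[(1)]
\item For $i\le k-1$, every monomial of $\sigma_k(i)$ contains at least two factors of index $\ge k$. Using $\lambda_i\ge\lambda_{k-1}$ and $\sigma_k=f\sigma_{k-2}$, this gives $|\sigma_k(i)|\le C\lambda_1\cdots\lambda_{k-1}/\lambda_i\le C\lambda_1\cdots\lambda_{k-2}\le C\sigma_k$.
\item Hence $\lambda_i\sigma_{k-1}(i)\le C\sigma_k$.
\item Monomial bounds do suffice for the other two pieces: $\lambda_i\sigma_{k-2}(i)\le C\sigma_{k-1}$ and $|\sigma_k(i)|\sigma_{k-3}(i)\le C\sigma_{k-1}\sigma_{k-2}/\lambda_i^2$.
\item Combining these with $\lambda_i+K+1\le C\lambda_i$ gives $G^{ii}\le C\sigma_{k-1}/\sigma_k$ for $i\le k-1$.
\end{enumerate}
This is the identity the paper uses in \eqref{u5}.

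For $l=k-1$ the identity is not needed. There $\lambda_k\le C$ and $\sigma_k=f\sigma_{k-1}$ is comparable to $\lambda_1\cdots\lambda_{k-1}$, so your monomial argument does work in that case.
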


\begin{proof}

We note that by the Newton inequality,
    \begin{align}\label{fii 1}
\frac{F^{ii}}{F}=\frac{\sigma_k^{ii}}{\sigma_k}-\frac{\sigma_{k-1}^{ii}}{\sigma_{k-1}}=\frac{\sigma_{k-1;i}^2-\sigma_{k;i}\sigma_{k-2;i}}{\sigma_k\sigma_{k-1}}\geq c(n,k)\frac{\sigma_{k-1;i}^2}{\sigma_k\sigma_{k-1}},\quad l=k-1,
    \end{align}
    \begin{align}\label{fii 2}
   \frac{F^{ii}}{F}=\frac{\sigma_k^{ii}}{\sigma_k}-\frac{\sigma_{k-2}^{ii}}{\sigma_{k-2}}=\frac{\sigma_{k-1;i}\sigma_{k-2;i}-\sigma_{k;i}\sigma_{k-3;i}}{\sigma_k\sigma_{k-2}}\geq c(n,k)\frac{\sigma_{k-1;i}\sigma_{k-2;i}}{\sigma_k\sigma_{k-2}},\quad l=k-2,
    \end{align}

and 
\begin{align}\label{fii 0}
     (k-1)\frac{\s_{k-1}}{\s_k}>\sum_{i=1}^n\frac{F^{ii}}{F}=\sum_{i=1}^n\frac{\sigma_k^{ii}}{\sigma_k}-\sum_{i=1}^n\frac{\sigma_{k-1}^{ii}}{\sigma_{k-1}}=(n-k+1)\frac{\sigma_{k-1}}{\sigma_k}-(n-k+2)\frac{\sigma_{k-2}}{\sigma_{k-1}}\geq c(n,k)\frac{\sigma_{k-1}}{\sigma_k}.
    \end{align}
For $l=k-1$, by using \eqref{fii 1} we have 
\begin{align}\label{u1}
     \lambda_i^2\frac{F^{ii}}{F}\leq C(n,k,K),\quad 1\leq i\leq k-1, 
\end{align}
and 
\begin{align}\label{u2}
     \lambda_i^2\frac{F^{ii}}{F}\leq C(n,k,K),\quad i\geq k, 
\end{align}
For $l=k-2$, by using \eqref{fii 2} we have 
\begin{align}\label{u3}
     \lambda_i^2\frac{F^{ii}}{F}\leq C(n,k,K)
     \frac{\s_{k-1}}{\s_k},\quad 1\leq i\leq k-2, 
\end{align} 
\begin{align}\label{u4}
     \frac{F^{ii}}{F}\leq C(n,k,K)\frac{\s_{k-1}}{\s_k},\quad i\geq k, 
\end{align}
and 
\begin{align}\label{u5}
     \lambda_{k-1}^2\frac{F^{k-1,k-1}}{F}<C(n,k,K)\lambda_{k-1}\frac{(\s_{k}-\s{k;k-1})\s_{k-2}}{\s_k\s_{k-2}}\leq C(n,k)\frac{\s_{k-1}}{\s_k}.
\end{align}
\textit{Case (a)} If $\sigma_{k;i}>0$ for some $1\leq i\leq k-1$, then 
\[ \sigma_{k-1;i}>\frac{\lambda_1\cdots\lambda_{k-1}\lambda_k}{\lambda_i},\quad \sigma_{k-2;i}>\frac{\lambda_1\cdots\lambda_{k-1}}{\lambda_i}.\]
Thus, plugging into \eqref{fii 1} and \eqref{fii 2}, we have
\begin{align}\label{l1}
  \lambda_i^2 \frac{F^{ii}}{F}\geq c(n,k)\frac{\sigma_k^2}{\sigma_k\sigma_{k-1}}=c(n,k),\quad l=k-1
\end{align}
and 
\begin{align}\label{l2}
 \lambda_i^2 \frac{F^{ii}}{F}\geq c(n,k)\frac{\sigma_k\sigma_{k-1}}{\sigma_k\sigma_{k-2}}=c(n,k)\frac{\s_{k-1}}{\sigma_{k-2}},\quad l=k-2.
\end{align}
\\
 If $\sigma_{k;i}>0$ for some $ i\geq k$, then
\[\sigma_{k-1;i}>\lambda_1\cdots\lambda_{k-1},\quad  \sigma_{k-2;i}>\lambda_1\cdots\lambda_{k-2}.\]
Thus, plugging into \eqref{fii 1} and \eqref{fii 2}, we have
\begin{align}\label{l3}
 \frac{F^{ii}}{F}\geq c(n,k)\frac{\sigma_{k-1}^2}{\sigma_k\sigma_{k-1}}=c(n,k),\quad l=k-1
\end{align}
and 
\begin{align}\label{l4}
\frac{F^{ii}}{F}\geq c(n,k)\frac{\sigma_{k-1}\sigma_{k-2}}{\sigma_k\sigma_{k-2}}=c(n,k)\frac{\s_{k-1}}{\sigma_{k-2}},\quad l=k-2.
\end{align}
\textit{Case (b)} If $\sigma_{k;i}\leq 0$ for some $ 1\leq i\leq n$, then
\[\lambda_i\sigma_{k-1;i}\geq\s_k,\quad \sigma_{k-2;i}>\lambda_1\cdots\lambda_{k-2}.\]
Thus, plugging into \eqref{fii 1}, we have
\begin{align}\label{l5}
 \lambda_i^2\frac{F^{ii}}{F}\geq c(n,k)\frac{\sigma_{k}^2}{\sigma_k\sigma_{k-1}}=c(n,k),\quad l=k-1.
\end{align}
When $l=k-2$, if $\lambda_i\s_{k-2;i}\geq \frac{\s_{k-1}}{2}$, then 
\begin{align}\label{l8}
     \lambda_i^2\frac{F^{ii}}{F}\geq c(n,k)\frac{\sigma_k\sigma_{k-1}}{\sigma_k\sigma_{k-2}}=c(n,k)\frac{\s_{k-1}}{\s_k};
\end{align}
if $\lambda_i\s_{k-2;i}<\frac{\s_{k-1}}{2}$, then
\[\s_{k-1;i}=\s_{k-1}-\lambda_i\s_{k-2;i}>\frac{\s_{k-1}}{2}.\]

For $1\leq i\leq k-2,$ 
\[\s_{k-2;i}\geq \frac{\lambda_1\cdots\lambda_{k-2}}{\lambda_i}.\]
Plugging into \eqref{fii 2}, we have
\begin{align}\label{l6}
     \lambda_i\frac{F^{ii}}{F}\geq c(n,k)\frac{\sigma_{k-1}\sigma_{k-2}}{\sigma_k\sigma_{k-2}}=c(n,k)\frac{\s_{k-1}}{\s_k};
\end{align}
For $i\geq k-1,$ 
\[\s_{k-2;i}\geq \lambda_1\cdots\lambda_{k-2}. \]
Plugging into \eqref{fii 2} we have
\begin{align}\label{l7}
    \frac{F^{ii}}{F}\geq c(n,k)\frac{\sigma_{k-1}\sigma_{k-2}}{\sigma_k\sigma_{k-2}}=c(n,k)\frac{\s_{k-1}}{\s_k}.
\end{align}
Since $\lambda_i+K+1$ is comparable to $\lambda_i$ when $\lambda_i$ is large and is bounded above and below otherwise, combining \eqref{u1}, \eqref{u2}, \eqref{u3}, \eqref{u4}, \eqref{u5}, \eqref{l1}, \eqref{l2}, \eqref{l3}, \eqref{l4}, \eqref{l5}, \eqref{l8}, \eqref{l6} and \eqref{l7}, together with $F=f$, we obtain 
\begin{align}
    C\geq G^{ii}\geq c,\quad l=k-1
\end{align}
and 
\begin{align}
    C\frac{\s_{k-1}}{\s_k}\geq G^{ii}\geq c\frac{\s_{k-1}}{\s_k},\quad l=k-2.
\end{align}
\end{proof}
We first recall the following inequality in \cite{DXZ}. See also \cites{HZ, Zhang}.
\begin{lem}[\cite{DXZ}, Lemma 3.1]\label{DXZ}
Suppose that $\lambda\in\Gamma_k^n$ and $\lambda_n\geq-\delta \lambda_1$. Suppose that $\delta\in(0,1)$ is sufficiently small and $\lambda_1/\s_k^{\frac{1}{k}}$ is sufficiently large
depending only on $n$ and $k$. Then for every $\xi\in\mathbb R^n$, there exist some $\gamma>0$ and $K_1>0$ depending on $n$ and $k$, such that
\begin{equation}
\begin{aligned}
 &-\frac{\displaystyle\sum_{p\ne q}
       \sigma_k^{pp,qq}\xi_p\xi_q}{\sigma_k}
 +K_1\frac{\displaystyle\left(\sum_i\sigma_k^{ii}\xi_i\right)^2}
          {\sigma_k^2}
 +\frac{2}{1+\delta}\sum_{i>1}\frac{\sigma_k^{ii}\xi_i^2}
 {\lambda_1\sigma_k}  \\
 &\hspace{35mm}\geq
 (1+\gamma)\frac{\sigma_k^{11}\xi_1^2}
 {\lambda_1\sigma_k}.
\end{aligned}
\end{equation}
\end{lem}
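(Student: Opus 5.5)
The proof will rest on the recursive splitting of Huisken--Sinestrari. Write $\lambda=(\lambda_1,\lambda')$ with $\lambda'=(\lambda_2,\dots,\lambda_n)$, so that
\[
\sigma_k(\lambda)=\lambda_1\sigma_{k-1}(\lambda')+\sigma_k(\lambda'),\qquad \sigma_k^{11}=\sigma_{k-1}(\lambda'),\qquad \sigma_k^{pp,qq}=\sigma_{k-2;pq}.
\]
Here $\sigma_{k-2;pq}$ denotes $\sigma_{k-2}$ with $\lambda_p,\lambda_q$ removed. Since $\lambda\in\Gamma_k$, we have $\lambda'\in\Gamma_{k-1}$.

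\textbf{Step 1: size of $\lambda_1$ relative to the rest.} Using $\lambda_n\geq-\delta\lambda_1$ with $\delta$ small, together with Lemma \ref{lemn}, I first show that when $\lambda_1/\sigma_k^{1/k}$ is large, the dominant part is $\lambda_1$. Precisely, I aim for
\[
|\sigma_k(\lambda')|\leq \epsilon\,\lambda_1\sigma_{k-1}(\lambda'),\qquad \sigma_k^{pp}=\lambda_1\sigma_{k-2;1p}+\sigma_{k-1;1p}\ \text{ with } |\sigma_{k-1;1p}|\leq \epsilon\lambda_1\sigma_{k-2;1p}\quad (p>1).
\]
Here $\epsilon\to0$ as $\delta\to0$ and $\lambda_1/\sigma_k^{1/k}\to\infty$. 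In particular $\sigma_k\approx\lambda_1\sigma_k^{11}$.

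\textbf{Step 2: decomposition of the quadratic form.} Split
\[
-\sum_{p\neq q}\sigma_k^{pp,qq}\xi_p\xi_q=-2\xi_1\sum_{q>1}\sigma_{k-2;1q}\xi_q-\sum_{p\neq q>1}\sigma_{k-2;pq}\xi_p\xi_q .
\]
For the $\lambda'$-block I write $\sigma_{k-2;pq}=\lambda_1\sigma_{k-3;1pq}+\sigma_{k-2;1pq}$. The leading part $-\lambda_1\sum\sigma_{k-3;1pq}\xi_p\xi_q$ is $\lambda_1$ times the off-diagonal Hessian of $\sigma_{k-1}(\lambda')$. I control it by the concavity of $\sigma_{k-1}^{1/(k-1)}$ on $\Gamma_{k-1}$, which gives a lower bound
\[
-\frac{k-2}{k-1}\,\lambda_1\frac{\bigl(\sum_{q>1}\sigma_{k-2;1q}\xi_q\bigr)^2}{\sigma_{k-1}(\lambda')}-\lambda_1\sum_{q>1}(\text{diagonal part}).
\]
The diagonal part vanishes for $\sigma_{k-1}$, since it is linear in each variable. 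The remainder $\sigma_{k-2;1pq}$ is of lower order by Step 1 and is absorbed into the term $\frac{\delta'}{\lambda_1}\sum_{i>1}\sigma_k^{ii}\xi_i^2$ via Cauchy--Schwarz.

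\textbf{Step 3: the cross term, which is the main obstacle.} Set $A=\sigma_k^{11}\xi_1$ and $B=\lambda_1\sum_{q>1}\sigma_{k-2;1q}\xi_q\approx\sum_{q>1}\sigma_k^{qq}\xi_q$. After dividing by $\sigma_k\approx\lambda_1\sigma_k^{11}$, the terms involving only $A,B$ are, up to errors of size $O(\epsilon)$,
\[
\frac{1}{\lambda_1^2(\sigma_k^{11})^2}\Bigl(-2AB-\tfrac{k-2}{k-1}B^2+K_1(A+B)^2\Bigr),
\]
and the goal is to dominate $(1+\gamma)A^2$. The quadratic form $(K_1-1-\gamma)A^2+(2K_1-2)AB+(K_1-\tfrac{k-2}{k-1})B^2$ is positive semidefinite once $K_1$ is large, because its discriminant is $(K_1-1)^2-(K_1-1-\gamma)(K_1-\tfrac{k-2}{k-1})$. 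The coefficient of $K_1$ in this discriminant is $-2+1+\gamma+\tfrac{k-2}{k-1}=\gamma-\tfrac{1}{k-1}<0$ for small $\gamma$, so the discriminant is negative for $K_1$ large.

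This is exactly where the gap $\tfrac{1}{k-1}$ in the concavity of $\sigma_{k-1}^{1/(k-1)}$ is spent. Any errors from $\sigma_k\neq\lambda_1\sigma_k^{11}$ and from replacing $B$ by $\sum\sigma_k^{qq}\xi_q$ must fit inside this margin and inside the $\frac{1-\delta}{1+\delta}$ slack of the term $\frac{2}{1+\delta}\sum_{i>1}\frac{\sigma_k^{ii}\xi_i^2}{\lambda_1\sigma_k}$. I will use that term, rather than discard it, to absorb the Cauchy--Schwarz errors $\epsilon|A||B|$ and $\epsilon B^2$. Its positivity against $\sum\sigma_{k-2;1q}^2\xi_q^2/\sigma_{k-1}(\lambda')$ follows from $\sigma_{k-2;1q}^2\leq C\,\sigma_{k-1}(\lambda')\,\sigma_{k-2;1q}/\lambda_q$-type Newton bounds.

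Finally, choose $\delta,\gamma$ small and then $K_1$ large and $\lambda_1/\sigma_k^{1/k}$ large, all depending only on $n,k$.
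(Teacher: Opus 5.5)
\textbf{There is a genuine gap at Step~1, and Steps~2 and~3 rest on it.} The paper does not prove this lemma; it quotes it from Dong--Xu--Zhang. So your argument has to stand on its own.

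The comparisons you aim for in Step~1 are false under the stated hypotheses. Negative eigenvalues of size $\delta\lambda_1$ can cancel an $O(1)$ fraction of the leading term. This happens exactly in the regime $\lambda_1/\sigma_k^{1/k}\gg1$, i.e.\ near $\partial\Gamma_k$, and shrinking $\delta$ does not help.

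For $k=2$, $n=3$, let $\lambda=(1,\delta+t,-\delta)$ with $t(1-\delta)=\delta^2+\epsilon_0$. Then $\lambda\in\Gamma_2$, $\lambda_3=-\delta\lambda_1$ and $\sigma_2=\epsilon_0$. On the other hand $\sigma_2(\lambda')=-\delta(\delta+t)$ and $\lambda_1\sigma_1(\lambda')=t$. Hence, as $\epsilon_0\to0$, $|\sigma_2(\lambda')|/(\lambda_1\sigma_1(\lambda'))\to1$ and $\sigma_2/(\lambda_1\sigma_2^{11})=\epsilon_0/t\to0$, however small $\delta$ is.

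For $k=3$, $n=4$, let $\lambda=(1,1,a,-\delta)$ with $a(1-2\delta)=\delta+\epsilon_0$. Then $\sigma_3=\epsilon_0$. In the splitting $\sigma_3^{22}=\lambda_1\sigma_{1;12}+\sigma_{2;12}$ one has $\sigma_{2;12}=-a\delta\approx-\tfrac12\lambda_1\sigma_{1;12}$, so your second claim fails with ratio $1/2$. Consequently neither the normalization $\sigma_k\approx\lambda_1\sigma_k^{11}$ used in Step~3 nor the replacement of $\sum_{q>1}\sigma_k^{qq}\xi_q$ by $B$ is available.

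Step~2 has a separate problem that persists even for convex $\lambda$, where Step~1 does hold. For $k\ge3$ the hypothesis does not force $\lambda_2\ll\lambda_1$. Take $\lambda=(M,M,\varepsilon,0)$ with $k=3$ and $\varepsilon\ll M$. Then $\sigma_{1;134}=\lambda_2=M$, which equals the ``leading'' part $\lambda_1\sigma_{0;134}$. The resulting term $-2M\xi_3\xi_4$ cannot be absorbed into $\frac{\delta'}{\lambda_1}(\sigma_3^{33}\xi_3^2+\sigma_3^{44}\xi_4^2)\approx\delta'M(\xi_3^2+\xi_4^2)$ for small $\delta'$.

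Even for $k=2$, the whole block $-\sum_{p\neq q>1}\xi_p\xi_q$ is ``remainder''. It is comparable to $\frac1{\lambda_1}\sum_{i>1}\sigma_2^{ii}\xi_i^2\approx\sum_{i>1}\xi_i^2$, not small against it, so the Cauchy--Schwarz absorption fails there too.

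Your $2\times2$ computation in $(A,B)$ is correct, and it shows where the $\frac1{k-1}$ margin from the concavity of $\sigma_{k-1}^{1/(k-1)}$ is spent. But it only covers a model regime in which $\lambda_1$ alone dominates and no cancellation occurs. A complete proof has to do two things:
\begin{enumerate}
\item treat jointly all eigenvalues comparable to $\lambda_1$ (for $k\ge3$ there can be several);
\item handle $\sigma_k\ll\lambda_1\sigma_k^{11}$ rather than approximate it away.
\end{enumerate}
For the second point, note that after multiplying through by $\sigma_k$ only the $K_1$-term carries the factor $1/\sigma_k$, so that regime has to be exploited through this term. Controlling negative eigenvalues of size comparable to $\delta\lambda_1$ is precisely what the lifting argument of Dong--Xu--Zhang is introduced for.
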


Using Lemma \ref{DXZ}, we establish the following concavity inequality for the Hessian quotient operator $ F(\lambda)=\frac{\sigma_k(\lambda)}{\sigma_l(\lambda)}$ with $l=k-1$ and $l=k-2$. Denote $F^{ii}=\partial F/\partial\lambda_i$ and
$F^{ii,jj}=\partial^2F/(\partial\lambda_i\partial\lambda_j)$ for vectors tangent to the level set of $F$.
\begin{lem}\label{sc iq}
Let $1\leq l< k\leq n$ and $l=k-1$, or $l=k-2$. Suppose that $\lambda\in\Gamma_k^n$, $\lambda_n\geq-K$ and that $F$ has a fixed positive positive lower bound $f_0$. There exists a large $K_0\geq 1$ and small constants $\delta>0$ and $\gamma>0$, depending only on $n$, $k$, $K$ and $f_0$, such that if
$\lambda_1/F^{\frac{1}{k-l}}\geq K_0$, then for every $\xi\in\mathbb R^n$ satisfying
\begin{equation}\label{cri eq}
 \sum_iF^{ii}\xi_i=0,
\end{equation}
the estimate
\begin{equation}\label{sc hq}
 -\sum_{i,j=1}^nF^{ii,jj}\xi_i\xi_j
 +\frac{2}{(1+\delta)\lambda_1}
  \sum_{i>1}F^{ii}\xi_i^2
 \geq
 (1+\gamma)\frac{F^{11}\xi_1^2}{\lambda_1}
\end{equation}
holds.
\end{lem}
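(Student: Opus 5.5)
We reduce \eqref{sc hq} to Lemma \ref{DXZ}, applied to $\sigma_k$ and $\sigma_l$ separately, after a lifting that converts the semiconvexity bound $\lambda_n\geq -K$ into the smallness condition $\lambda_n\geq-\delta\lambda_1$ required there. Write $F=\sigma_k/\sigma_l$. Then
\[
\frac{F^{ii}}{F}=\frac{\sigma_k^{ii}}{\sigma_k}-\frac{\sigma_l^{ii}}{\sigma_l},
\]
and, for $\xi$ satisfying \eqref{cri eq},
\[
-\frac{F^{ii,jj}\xi_i\xi_j}{F}
=-\frac{\sigma_k^{ii,jj}\xi_i\xi_j}{\sigma_k}+\frac{\sigma_l^{ii,jj}\xi_i\xi_j}{\sigma_l}
+\Big(\frac{\sigma_k^{ii}\xi_i}{\sigma_k}\Big)^2-\Big(\frac{\sigma_l^{ii}\xi_i}{\sigma_l}\Big)^2-2\,\frac{F^{ii}\xi_i}{F}\,\frac{\sigma_l^{jj}\xi_j}{\sigma_l}.
\]
The last term vanishes by \eqref{cri eq}, and \eqref{cri eq} also gives $\sigma_k^{ii}\xi_i/\sigma_k=\sigma_l^{ii}\xi_i/\sigma_l=:A$. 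So the square terms cancel, and
\[
-\frac{F^{ii,jj}\xi_i\xi_j}{F}=-\frac{\sigma_k^{ii,jj}\xi_i\xi_j}{\sigma_k}+\frac{\sigma_l^{ii,jj}\xi_i\xi_j}{\sigma_l}.
\]
By concavity of $\sigma_l^{1/l}$, the $\sigma_l$ Hessian term is bounded below by $-(1-1/l)A^2$. On the $\sigma_k$ side we want Lemma \ref{DXZ}, which carries a $+K_1A^2$ term. Since both pieces are controlled by $A^2$, the task becomes bounding $A^2$ by the good terms, or arranging that it enters with a favorable sign.

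The lifting works as follows. Put $\mu=\lambda+K\mathbf 1$, or $\mu_i=\lambda_i+t$ for a suitable $t$ comparable to $K$, so that $\mu\in\Gamma_n$. By Lemma \ref{lll}, $\lambda_{l+1}\geq c$ and $\lambda_k\leq C$. Since $\lambda_1/F^{1/(k-l)}\geq K_0$, the ratio $\lambda_1/\sigma_k^{1/k}$ is large: indeed $\sigma_k\approx \lambda_1\cdots\lambda_k$ up to bounded factors, and only $\lambda_1,\dots,\lambda_{k-1}$ can be large. Moreover $\lambda_n\geq -K\geq-\delta\lambda_1$ once $K_0$ is large. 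Hence the hypotheses of Lemma \ref{DXZ} hold for $\lambda$ directly (with $\delta$ small), so lifting may be unnecessary for the $\sigma_k$ part. We apply Lemma \ref{DXZ} to $\sigma_k$ to get
\[
-\frac{\sigma_k^{pq}\xi_p\xi_q}{\sigma_k}+K_1A^2+\frac{2}{1+\delta}\sum_{i>1}\frac{\sigma_k^{ii}\xi_i^2}{\lambda_1\sigma_k}\geq(1+\gamma)\frac{\sigma_k^{11}\xi_1^2}{\lambda_1\sigma_k}.
\]
Here the diagonal terms $\sigma_k^{ii,ii}=0$. It then remains to convert $\sigma_k^{ii}/\sigma_k$ into $F^{ii}/F$. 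These differ by $\sigma_l^{ii}/\sigma_l$, which enters with the correct sign on the left-hand side. For $i=1$, we need $\sigma_l^{11}/\sigma_l\ll\sigma_k^{11}/\sigma_k$ or a suitable comparison. Since $\lambda_1$ is large and $\lambda_1\sigma_k^{11}\approx\sigma_k$, we have $\lambda_1 F^{11}/F=\sigma_{k;1}/\sigma_k\cdot\lambda_1-\lambda_1\sigma_{l-1;1}/\sigma_l$, and Case (a) estimates in the style of Lemma \ref{ell} give $\lambda_1^2F^{11}/F$ comparable to a constant. This comparison determines how much of $\gamma$ survives.

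The main obstacle is the $K_1A^2$ term, since \eqref{sc hq} has no $(\sum F^{ii}\xi_i)^2$ term to absorb it. To handle it, I would use the full identity for $\sigma_l$ instead of concavity. I would then apply a DXZ/Zhang-type inequality to $\sigma_l$ as well, or use that for $l=k-1,k-2$ the ratio $\lambda_1^2\sigma_l^{11}/\sigma_l$ is not small. The goal is to show that $+\sigma_l^{ij}\xi_i\xi_j/\sigma_l$ contributes $+A^2$ minus controlled terms. Concretely, I would try the tangential--radial decomposition: write $\xi=\eta+s\lambda$-type direction (radial along $\mathbf 1$ or $\lambda$). Homogeneity of $F$ of degree $k-l\leq2$ makes the radial part explicit, and the tangential part has $A=0$ exactly. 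For $A=0$ no $K_1$ is needed, and the cross terms are estimated by Cauchy--Schwarz against the $\delta$-slack in $2/(1+\delta)$ and the $\gamma$-slack, at the cost of shrinking $\delta,\gamma$. The place where $k-l\leq 2$ is used is here: the Newton-type bound $\sigma_l^{ii,jj}$ versus $(\sigma_l^{ii})^2$ must be sharp enough that the residual radial term has the right sign. This is the step where the proof will actually be decided.
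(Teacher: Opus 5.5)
There is a genuine gap. The step you yourself call decisive is never carried out, and the mechanisms you sketch for it fail.

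\textbf{Sign errors.} In your (correct) identity the $\sigma_l$-Hessian enters as $+\sum_{i,j}\sigma_l^{ii,jj}\xi_i\xi_j/\sigma_l$, so you need a \emph{lower} bound for it. Concavity of $\sigma_l^{1/l}$ gives only the upper bound $\sum_{i,j}\sigma_l^{ii,jj}\xi_i\xi_j/\sigma_l\le(1-\frac1l)A^2$. The term can in fact be negative with $A=0$: for $l=2$, $\sum_{i,j}\sigma_2^{ii,jj}\xi_i\xi_j=(\sum_i\xi_i)^2-|\xi|^2$, and one can impose $\sum_i\xi_i=0=A$. Likewise, once you write $\sigma_k^{ii}/\sigma_k=F^{ii}/F+\sigma_l^{ii}/\sigma_l$ in Lemma~\ref{DXZ}, the terms $\frac{2}{(1+\delta)\lambda_1}\sum_{i>1}\sigma_l^{ii}\xi_i^2/\sigma_l$ sit on the left of that inequality but are absent from \eqref{sc hq}. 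So they have the wrong sign; only the $i=1$ term helps.

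\textbf{The $K_1A^2$ term cannot be disposed of as you suggest.} Condition \eqref{cri eq} gives $\sigma_k^{ii}\xi_i/\sigma_k=\sigma_l^{ii}\xi_i/\sigma_l$; it does not make this common value $A$ vanish. Moreover $\xi$ has no radial part to split off, since the radial direction satisfies $\sum_iF^{ii}\lambda_i=(k-l)F\neq0$. In the paper the tangential--radial decomposition is used the other way, to pass from Lemma~\ref{sc iq} to Lemma~\ref{general concavity}, where a $K_1$-term is allowed.

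Nor is $K_1A^2$ small. Take $k=2$, $l=1$, $F=1$, and the tangential vector $\xi=(\xi_1,\xi_2,0,\dots,0)$. Then $F^{11}\sim\lambda_1^{-2}$ and $F^{22}\sim1$, so $\xi_2=O(\xi_1\lambda_1^{-2})$ and every term of \eqref{sc hq} is $O(\xi_1^2\lambda_1^{-3})$. But $A=(\xi_1+\xi_2)/\sigma_1\approx\xi_1/\lambda_1$, so $K_1A^2\approx K_1\xi_1^2\lambda_1^{-2}$. The only term of that size on the $\sigma_l$ side is $(1+\gamma)\sigma_l^{11}\xi_1^2/(\lambda_1\sigma_l)\approx(1+\gamma)\xi_1^2\lambda_1^{-2}$. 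Hence splitting $F$ into a black-box $\sigma_k$ estimate plus a separate $\sigma_l$ estimate would need Lemma~\ref{DXZ} with $K_1\le(1+\gamma)(1+O(\lambda_1^{-1}))$, a sharp constant the lemma does not supply.

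\textbf{The missing idea is the paper's lifting.} You are right that $\lambda_n\ge-\delta\lambda_1$ already holds for $\lambda$ itself; the lifting serves a different purpose. It turns the quotient into a single $\sigma_k$, so that the $K_1$-term disappears.

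Normalize $F=1$, i.e.\ $\sigma_k=\sigma_l$, and set $\tilde\lambda=(\lambda_1,\dots,\lambda_n,k-l-1,-1+s)\in\mathbb R^{n+2}$, $\tilde\xi=(\xi,0,0)$. The appended pair $(k-l-1,-1)$ has $(\sigma_1,\sigma_2)=(-1,0)$ if $l=k-1$ and $(0,-1)$ if $l=k-2$. Hence $\sigma_k(\tilde\lambda)=\sigma_l(F-1)+sP=sP$ with $P=\sigma_{k-1}+(k-l-1)\sigma_{k-2}$, and one checks that $\tilde\lambda\in\Gamma_k^{n+2}$.

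At $F=1$ the first derivatives of $\sigma_k(\tilde\lambda)$ in the first $n$ variables are $\sigma_lF^{ii}+sP^{ii}$. By \eqref{cri eq}, its second derivative along $\tilde\xi$ is $\sigma_l\sum_{i,j}F^{ii,jj}\xi_i\xi_j+s\sum_{i,j}P^{ii,jj}\xi_i\xi_j$, because the cross terms $2\bigl(\sum_i\sigma_l^{ii}\xi_i\bigr)\bigl(\sum_jF^{jj}\xi_j\bigr)$ vanish.

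Lemma~\ref{DXZ} applies to $\tilde\lambda$: its smallest entry is at least $-Kf_0^{-1/(k-l)}-1\ge-\delta\lambda_1$, and $\sigma_k(\tilde\lambda)\to0$ as $s\to0$. After multiplying that inequality by $\sigma_k(\tilde\lambda)$, the $K_1$-term becomes $K_1s\bigl(\sum_iP^{ii}\xi_i\bigr)^2/P$. Dividing by $\sigma_l$ and letting $s\to0$ yields \eqref{sc hq} with no $A^2$ term at all.

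This is also where $k-l\le2$ enters, rather than through sharpness of Newton-type bounds. For $k-l\ge3$ an analogous exact lifting would need real numbers $\mu$ with $\sigma_1(\mu)=\sigma_2(\mu)=0$ but $\sigma_{k-l}(\mu)\neq0$. That is impossible, since $|\mu|^2=\sigma_1(\mu)^2-2\sigma_2(\mu)$.
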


\begin{proof}
Since $F$ is homogeneous of degree $k-l$, \eqref{sc hq} is homogeneous of degree $k-l-2$. After replacing $\lambda$ by
$\lambda/F(\lambda)^{\frac{1}{k-l}}$ it is enough for us to prove the assertion under the following normalization
\begin{equation}\label{F1}
 F=1,
 \qquad \sigma_k=\sigma_{l}.
\end{equation}
For $0<s<1$, set
\[
 \tilde\lambda=(\lambda_1,\ldots,\lambda_n,(k-l-1),-1+s),
 \qquad
 \tilde\xi=(\xi_1,\ldots,\xi_n,0,0).
\]
For $1\leq j\leq k$,
\begin{equation}\label{sj}
 \sigma_j(\tilde\lambda)
 =\sigma_j-\sigma_{j-k+l}+s(\s_{j-1}+(k-l-1)\s_{j-2})
\end{equation}
where $\sigma_0=1$ and $\sigma_j=0$ for $j<0$.
The Newton--Maclaurin inequalities imply that,
\begin{equation}
 \frac{\sigma_j}{\sigma_{j-k+l}}\geq \frac{(l+1)(n-j+k-l)\sigma_k}{(j+1-k+l)(n-l)\sigma_l}>1.
\end{equation}
Thus, for $2\leq j<k$,
\[
\sigma_j(\tilde\lambda)>s\sigma_{j-1}>0;
\]
for $ j=1$,
\[\sigma_1(\tilde\lambda)=\sigma_1+s+k-l-2>0;\]
for $j=k$, equations \eqref{F1} and \eqref{sj} give
\begin{equation}\label{k hq2}
 \sigma_k\bigl(\tilde\lambda(s)\bigr)
 =s\left(\sigma_{k-1}+(k-l-1)\sigma_{k-2}\right)>0.
\end{equation}
Therefore, for every $0<s<1$,
\begin{equation}\label{sk2}
     \tilde\lambda(s)\in\Gamma_k^{n+2}.
\end{equation}

Choose $C_0=\delta^{-1}(K/f_0^{\frac{1}{k-l}}+1)$. Here $\delta$ is determined by Lemma \ref{DXZ}. Under the normalized assumption, for $0<s<1$, 
\[
 \min_{1\leq p\leq n+2}\lbrace \tilde\lambda_p(s)\rbrace \ge-\frac{K}{f_0^{\frac{1}{k-l}}}-1.
\]
Denote by $K_0'=\max\lbrace C_0, K_0\rbrace$ where $K_0$ is determined by Lemma \ref{DXZ}.
If
\begin{equation}
 \frac{\lambda_1}{\sigma_k(\tilde\lambda(s))^{1/k}}
 =\frac
  {\lambda_1}{
   \left[s(\sigma_{k-1}+(k-l-1)\sigma_{k-2})\right]^{1/k}}\geq C(n,k)\lambda_1^{\frac{1}{k}}\ge K_0',
\end{equation}
in other words,
\[\lambda_1\geq C(n,k) K_0'^{}k,\] then $\tilde \lambda$ satisfies the assumptions of Lemma \ref{DXZ}. We replace $C(n,k) K_0'^k$ by $K_0$ for convenience in this Lemma. Applying Lemma \ref{DXZ} to $\tilde\lambda$ yields
\begin{equation}\label{sk tl}
\begin{aligned}
 &-\displaystyle\sum_{p\ne q}
        \sigma_k(\tilde \lambda)^{pp,qq}\xi_p\xi_q
 +K_1\frac{\displaystyle\left(\sum_i \sigma_k(\tilde \lambda)^{ii}\xi_i\right)^2}
          { \sigma_k(\tilde \lambda)}
 +2\sum_{i>1}\frac{ \sigma_k(\tilde \lambda)^{ii}\xi_i^2}
 {(1+\delta)\lambda_1 }  \\
 &\hspace{35mm}\geq
 (1+\gamma)\frac{ \sigma_k(\tilde \lambda)^{11}\xi_1^2}
 {\lambda_1 }.
\end{aligned}
\end{equation}

We rewrite \eqref{k hq2} as
\begin{equation}\label{eq s}
 \sigma_k(\tilde\lambda)
 =\sigma_l(F-1)+s\bigl(\sigma_{k-1}+(k-l-1)\sigma_{k-2}\bigr)=s\bigl(\sigma_{k-1}+(k-l-1)\sigma_{k-2}\bigr).
\end{equation}
Direct differentiation gives,
\begin{equation}
 \sigma_k^{ii}(\tilde\lambda)
 =\sigma_l F^{ii}+s\bigl(\sigma_{k-1}^{ii}+(k-l-1)\sigma_{k-2}^{ii}\bigr)
\end{equation}
for $1\leq i\leq n$. Under \eqref{cri eq}, since $\xi_{n+1}=\xi_{n+2}
=0$, we have
\begin{equation}\label{eq 2.1}
 \sum_{p=1}^{n+2}\sigma_k^{pp}(\tilde\lambda)\tilde\xi_p
 =s\sum_i(\sigma_{k-1}^{ii}+(k-l-1)\sigma_{k-2}^{ii})\xi_i.
\end{equation}
For $1\leq i, j\leq n$,
\[
 \sigma_k^{ii,jj}(\tilde\lambda)
 =\sigma_{l}F^{ii,jj}+\sigma_{l}^{ii}F^{jj}+\sigma_{l}^{jj}F^{ii}+s(\sigma_{k-1}^{ii,jj}+(k-l-1)\sigma_{k-2}^{ii,jj}).
\]
Under \eqref{cri eq}, since $\xi_{n+1}=\xi_{n+2}
=0$, we have
\begin{equation}\label{eq2 1}
\begin{aligned}
 \sum_{p\ne q}\sigma_k^{pp,qq}(\tilde\lambda)
       \tilde\xi_p\tilde\xi_q
 &=\sigma_l\sum_{i,j}F^{ii,jj}\xi_i\xi_j+s\sum_{i\ne j}(\sigma_{k-1}^{ii,jj}+(k-l-1)\sigma_{k-2}^{ii,jj})\xi_i\xi_j.
\end{aligned}
\end{equation}
Similarly,
\begin{equation}\label{eq2 2}
 \sum_{p>1}\sigma_k^{pp}(\tilde\lambda)\tilde\xi_p^2
 =\sigma_l\sum_{i>1}F^{ii}\xi_i^2
 +s\sum_{i>1}(\sigma_{k-1}^{ii}+(k-l-1)\sigma_{k-2}^{ii})\xi_i^2,
\end{equation}
and
\begin{equation}\label{eq2 3}
 \sigma_k^{11}(\tilde\lambda)\tilde\xi_1^2
 =\left(\sigma_l F^{11}+s(\sigma_{k-1}^{11}+(k-l-1)\sigma_{k-2}^{11})\right)\xi_1^2.
\end{equation}
Plugging \eqref{eq s}, \eqref{eq 2.1}, \eqref{eq2 1}, \eqref{eq2 2} and \eqref{eq2 3} into \eqref{sk tl}, we obtain
\begin{equation}\label{eq2 hq1}
\begin{aligned}
 &-\sigma_l\sum_{i,j}F^{ii,jj}\xi_i\xi_j
 -s\sum_{i\ne j}(\sigma_{k-1}^{ii,jj}+(k-l-1)\sigma_{k-2}^{ii,jj})\xi_i\xi_j\\
 &+\frac{K_1s}{\sigma_{k-1}+(k-l-1)\sigma_{k-2}}
 \left(\sum_i(\sigma_{k-1}^{ii}+(k-l-1)\sigma_{k-2}^{ii})\xi_i\right)^2 \\
 &
 +\frac{2\sigma_l}{(1+\delta)\lambda_1}
 \sum_{i>1}F^{ii}\xi_i^2
 +\frac{2s}{(1+\delta)\lambda_1}
 \sum_{i>1}(\sigma_{k-1}^{ii}+(k-l-1)\sigma_{k-2}^{ii})\xi_i^2 \\
\geq&
 \frac{1+\gamma}{\lambda_1}
 \left(\sigma_l F^{11}+s(\sigma_{k-1}^{11}+(k-l-1)\sigma_{k-2}^{11})\right)\xi_1^2.
\end{aligned}
\end{equation}

Dividing \eqref{eq2 hq1} by $\sigma_l$ gives
\begin{equation}
\begin{aligned}
 &-\sum_{i,j}F^{ii,jj}\xi_i\xi_j
 +\frac{2}{(1+\delta)\lambda_1}
 \sum_{i>1}F^{ii}\xi_i^2 -(1+\gamma)\frac{F^{11}\xi_1^2}{\lambda_1}\\
 &+s\Bigg[
 -\sum_{i\ne j}\frac{\sigma_{k-1}^{ii,jj}+(k-l-1)\sigma_{k-2}^{ii,jj}}
 {\sigma_{l}}\xi_i\xi_j
 +K_1\frac{\left(\sum_i(\sigma_{k-1}^{ii}+(k-l-1)\sigma_{k-2}^{ii})\xi_i\right)^2}
 {\sigma_{l}(\sigma_{k-1}+(k-l-1)\sigma_{k-2})}\\
 &\hspace{17mm}
 +\frac{2}{(1+\delta)\lambda_1}
 \sum_{i>1}\frac{(\sigma_{k-1}^{ii}+(k-l-1)\sigma_{k-2}^{ii})\xi_i^2}{\sigma_{l}}
 -(1+\gamma)\frac{(\sigma_{k-1}^{11}+(k-l-1)\sigma_{k-2}^{11})\xi_1^2}
 {\lambda_1\sigma_{l}}\Bigg]\\
 &\geq 0
\end{aligned}
\end{equation}
for all $0<s<1$. Thus we obtain \eqref{sc hq} by letting $s\rightarrow0$.

\end{proof}

\begin{rem}
    By using Lemma \ref{DXZ}, we can establish the same concavity inequalities as in Lemma \ref{sc iq} under the dynamic semiconvexity condition.
\end{rem}

Now we apply a suitable tangential--radial decomposition to remove the tangency assumtion in Lemma \ref{sc iq}, thereby establishing the Lu–Tsai concavity assumption \cite{Lu} under the semiconvexity condition.\\

\textit{Proof of Lemma \ref{general concavity}}

Decompose
\begin{equation}\label{de z}
 \zeta=\xi+a\lambda,
 \qquad
 a=\frac{\sum_iF^{ii}\zeta_i}{(k-l)F}.
\end{equation}
Thus,
\begin{align}\label{ce}
\sum_iF^{ii}\xi_i=0.     
\end{align}

Since $F$ is homogeneous of degree $k-l$,
\[
 \sum_jF^{ii,jj}\lambda_j=(k-l-1)F^{ii},
 \qquad
 \sum_{i,j}F^{ii,jj}\lambda_i\lambda_j=(k-l)(k-l-1)F.
\]
Consequently, by \eqref{ce},
\begin{equation}\label{radial hessian}
 -\sum_{i,j}F^{ii,jj}\zeta_i\zeta_j=-\sum_{i,j}F^{ii,jj}(\xi_i+a\lambda_i)(\xi_j+a\lambda_j)
 =-\sum_{i,j}F^{ii,jj}\xi_i\xi_j-(k-l)(k-l-1)a^2F.
\end{equation}
Using Young's inequality, we have
\begin{equation}\label{radial gradient}
 \frac{2}{(1+\delta)\lambda_1}\sum_{i>1}F^{ii}\zeta_i^2=\frac{2}{(1+\delta)\lambda_1}\sum_{i>1}F^{ii}(\xi_i+a\lambda_i)^2
 \geq
 \frac{2}{(1+2\delta)\lambda_1}\sum_{i>1}F^{ii}\xi_i^2
 -C_\delta a^2\sum_{i>1}\frac{F^{ii}\lambda_i^2}{\lambda_1}.
\end{equation}
Apply Lemma \ref{sc iq} with $2\delta$ in place of $\delta$. Lemma \ref{ell} implies that for every $1\leq i\leq n$
\begin{equation}\label{radial bounds1}
F^{ii}\lambda_i^2\leq CF,\quad l=k-1
\end{equation}
\begin{equation}\label{radial bounds2}
\frac{F^{ii}\lambda_i^2}{\lambda_1}\leq CF\frac{\s_{k-1}}{\lambda_1\s_k}\leq CF,\quad l=k-2.
\end{equation}
It follows from \eqref{radial hessian}, \eqref{radial bounds1} and \eqref{radial bounds2} that the constrained
left-hand side evaluated at $\xi$ is at most
\begin{equation}\label{lk1}
 -\sum_{i,j}F^{ii,jj}\zeta_i\zeta_j
 +\frac{2}{(1+\delta)\lambda_1}\sum_{i>1}F^{ii}\zeta_i^2
 +Ca^2F.
\end{equation}
 On the other hand, Young's inequality, \eqref{radial bounds1} and \eqref{radial bounds2} give
\begin{equation}\label{lk2}
 (1+\gamma_0)\frac{F^{11}(\zeta_1-a\lambda_1)^2}{\lambda_1}
 \geq
 \left(1+\frac{\gamma_0}{2}\right)\frac{F^{11}\zeta_1^2}{\lambda_1}
 -C_{\gamma_0}a^2F.
\end{equation}
Finally,
\[
 a^2F=\frac{1}{(k-l)^2F}\left(\sum_iF^{ii}\zeta_i\right)^2.
\]
Combining \eqref{lk1} and \eqref{lk2}, and choosing $K_1$ sufficiently large depending on $n$, $k$, $K$ and $f_0$, we prove Lemma \ref{general concavity} after relabeling $\gamma_0/2$ as $\gamma$.
\qed

\section{Jacobi inequality}\label{sec:3}

\begin{lem}\label{jacobi}
Let $u$ be a smooth admissible solution of \eqref{eq hq} satisfying
$\lambda_n\geq-K$. Then there exist a large $K_0$, a small $\varepsilon>0$, and
$C>0$, depending only on the data in Theorem \ref{thm main}, such that, at a
point $x_0$ where $\lambda_1>K_0$,
\begin{equation}\label{jacobi eq}
 F^{ij}b_{ij}\geq \varepsilon F^{ij}b_i b_j-C,
 \qquad b=\log\lambda_1,
\end{equation}
holds in the viscosity sense.
\end{lem}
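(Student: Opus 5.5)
Since $\lambda_1$ is only Lipschitz where it has multiplicity, we argue in the viscosity sense as in Shankar--Yuan and Lu--Tsai. At $x_0$ we rotate coordinates so that $D^2u(x_0)$ is diagonal with $u_{11}\geq\cdots\geq u_{nn}$. If $\lambda_1$ has multiplicity $m$, we replace $b$ near $x_0$ by the smooth function $\tilde b=\log\tilde\lambda_1$, where $\tilde\lambda_1$ is the first eigenvalue of a suitable perturbation, or use the standard upper-touching function $\tilde b=\log u_{11}$ computed in the fixed frame. This function touches $b$ from below at $x_0$, so it suffices to prove \eqref{jacobi eq} for it classically. The computation uses the usual first and second derivative formulas for $\lambda_1$:
\[
b_i=\frac{u_{11i}}{\lambda_1},\qquad
b_{ii}\geq \frac{u_{11ii}}{\lambda_1}+\frac{2}{\lambda_1}\sum_{p>1}\frac{u_{1pi}^2}{\lambda_1-\lambda_p}-\frac{u_{11i}^2}{\lambda_1^2}.
\]
When $\lambda_1$ is repeated, the terms with $\lambda_p=\lambda_1$ are dropped, which is harmless since those $u_{1pi}$ vanish for the touching function.

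Next we differentiate the equation $F(D^2u)=f(x,u)$ twice in the $e_1$ direction:
\[
F^{ii}u_{ii11}=-F^{pq,rs}u_{pq1}u_{rs1}+f_{11}\geq -\sum_{i,j}F^{ii,jj}u_{ii1}u_{jj1}+2\sum_{i>1}\frac{F^{11}-F^{ii}}{\lambda_1-\lambda_i}u_{1i1}^2\cdot(\text{sign})-C(1+\lambda_1).
\]
Here the off-diagonal part equals $-\sum_{p\neq q}\frac{F^{pp}-F^{qq}}{\lambda_p-\lambda_q}u_{pq1}^2$, which is nonnegative by concavity of $F^{1/(k-l)}$ in $\Gamma_k$. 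The error term $f_{11}$ involves $f_u u_{11}$, $f_{uu}u_1^2$ and $u_1$, and is bounded by $C(1+\lambda_1)$ once $|Du|$ is bounded interiorly. That bound follows from semiconvexity and $\|u\|_{L^\infty}$. Dividing by $\lambda_1$ and commuting $u_{11ii}=u_{ii11}$, we get
\[
F^{ii}b_{ii}-\varepsilon F^{ii}b_i^2\geq \frac{1}{\lambda_1}\Big[-\sum F^{ii,jj}u_{ii1}u_{jj1}+2\sum_{i>1}\frac{F^{ii}u_{11i}^2}{\lambda_1-\lambda_i}\Big]-(1+\varepsilon)\sum_i\frac{F^{ii}u_{11i}^2}{\lambda_1^2}-C,
\]
where we have kept from the third-derivative terms only the ones involving $u_{11i}$ and $u_{1i1}$.

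We then bound the right-hand side term by term. For $i>1$, the semiconvexity $\lambda_i\geq -K$ gives $\lambda_1-\lambda_i\leq(1+\delta')\lambda_1$ once $\lambda_1\geq K_0$ with $K_0\gg K/\delta'$. Hence
\[
\frac{2F^{ii}u_{11i}^2}{\lambda_1(\lambda_1-\lambda_i)}\geq \frac{2}{(1+\delta')\lambda_1^2}F^{ii}u_{11i}^2 .
\]
This absorbs $(1+\varepsilon)F^{ii}u_{11i}^2/\lambda_1^2$ for $i>1$ and leaves a surplus $\sim \frac{1-2\delta'}{\lambda_1^2}F^{ii}u_{11i}^2\geq0$. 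The remaining bad term is the $i=1$ term $-(1+\varepsilon)F^{11}u_{111}^2/\lambda_1^2$, and this is exactly where Lemma \ref{general concavity} enters. We apply it with $\zeta_i=u_{ii1}$, and we use $\lambda_1/F^{1/(k-l)}\geq K_0$, which holds since $F=f\geq f_0$ is bounded. Since $\sum_iF^{ii}\zeta_i=f_1+f_uu_1$ is bounded, the term $\frac{K_1}{F}(\sum F^{ii}\zeta_i)^2$ contributes only $C/\lambda_1$. Thus
\[
-\sum F^{ii,jj}\zeta_i\zeta_j+\frac{2}{(1+\delta)\lambda_1}\sum_{i>1}F^{ii}\zeta_i^2\geq(1+\gamma)\frac{F^{11}\zeta_1^2}{\lambda_1}-C.
\]

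The main obstacle is the bookkeeping. The term $\frac{2}{(1+\delta)\lambda_1}\sum_{i>1}F^{ii}u_{ii1}^2$ must be paid for out of the same good term $2\sum_{i>1}F^{ii}u_{11i}^2/(\lambda_1(\lambda_1-\lambda_i))$, since $u_{ii1}=u_{1i i}$ is a different third derivative from $u_{11i}$. We therefore split the available positive terms. The off-diagonal concavity term $-\sum_{p\ne q}\frac{F^{pp}-F^{qq}}{\lambda_p-\lambda_q}u_{pq1}^2$ restricted to $p=1$ gives $\sum_{i>1}\frac{F^{ii}-F^{11}}{\lambda_1-\lambda_i}u_{1i1}^2$. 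Together with the $b_{ii}$ term this supplies roughly $\frac{2}{\lambda_1}\sum_{i>1}F^{ii}u_{11i}^2/\lambda_1$, while the Jacobi correction $(1+\varepsilon)/\lambda_1^2$ consumes only about half of it. The leftover, of order $(1-C\delta')/\lambda_1^2$, covers the needed $\frac{2}{(1+\delta)\lambda_1^2}\sum_{i>1}F^{ii}u_{ii1}^2$. In the classical Shankar--Yuan/Lu--Tsai computation, the index identification $u_{ii1}=u_{1ii}$ versus $u_{11i}$ is handled by summing the $b_{11}$ direction appropriately, and we follow that layout. Granting it, the $i=1$ inequality reduces to
\[
(1+\gamma)\frac{F^{11}u_{111}^2}{\lambda_1^2}\geq(1+\varepsilon)\frac{F^{11}u_{111}^2}{\lambda_1^2},
\]
which holds for $\varepsilon<\gamma$. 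Taking $\delta'<\delta/4$ and $K_0$ large then yields \eqref{jacobi eq}.
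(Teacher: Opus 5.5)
Your overall layout matches the paper's: a smooth lower touching function built from a perturbed top eigenvector, Andrews' formula, Lemma \ref{general concavity} with $\zeta_p=u_{pp1}$, and semiconvexity to get $\lambda_1-\lambda_p\le(1+\delta)\lambda_1$. However, there is a genuine gap exactly at the step you "grant". After contracting the second-derivative formula with $F^{ii}$, the good term is a double sum
\[
\frac{2}{\lambda_1}\sum_{i}\sum_{p>1}\frac{F^{ii}u_{1pi}^2}{\lambda_1-\lambda_p}.
\]
You kept only its $i=1$ entries, where $u_{1p1}=u_{11p}$, and threw away the diagonal entries $i=p$. Those entries give
\[
\frac{2}{\lambda_1}\sum_{p>1}\frac{F^{pp}u_{1pp}^2}{\lambda_1-\lambda_p}\geq\frac{2}{(1+\delta)\lambda_1^2}\sum_{p>1}F^{pp}u_{pp1}^2,
\]
which are precisely the $\zeta_p^2$ terms that \eqref{c hq} requires.

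Your substitute cannot work. You propose to pay for $\frac{2}{(1+\delta)\lambda_1^2}\sum_{i>1}F^{ii}u_{ii1}^2$ with the leftover $(1-C\delta')\lambda_1^{-2}F^{ii}u_{11i}^2$. But $u_{ii1}$ and $u_{11i}$ are independent third derivatives, so no multiple of $u_{11i}^2$ bounds $u_{ii1}^2$. Even the constants fail, since $1-C\delta'<2/(1+\delta)$.

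No splitting is needed, because for $p>1$ the entries $(i,p)=(1,p)$ and $(i,p)=(p,p)$ are distinct. The $i=1$ entries, combined with Andrews' off-diagonal terms $2\sum_{p>1}\frac{F^{pp}-F^{11}}{\lambda_1-\lambda_p}u_{11p}^2$, give $\frac{2F^{pp}u_{11p}^2}{\lambda_1(\lambda_1-\lambda_p)}$. This beats $-(1+\varepsilon)F^{pp}u_{11p}^2/\lambda_1^2$ with room to spare. The $i=p$ entries then feed Lemma \ref{general concavity}. This is how the paper's computation is organized.

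A smaller point concerns repeated eigenvalues. Your claim that the terms with $\lambda_p=\lambda_1$ vanish is false: $u_{1pi}$ need not be zero, and the quotient $u_{1pi}^2/(\lambda_1-\lambda_p)$ is simply undefined there. So for $1<i\le m$ your lower bound $\frac{2F^{ii}u_{11i}^2}{\lambda_1(\lambda_1-\lambda_i)}$ does not exist, and $-(1+\varepsilon)F^{ii}u_{11i}^2/\lambda_1^2$ is left uncontrolled.

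The paper avoids this by taking $D_iv(x_0)=\sum_{p\neq1}\frac{u_{1pi}}{\lambda_1+1-\lambda_p}e_p$ and $\phi=\ln\bigl(D^2u(v,v)+\langle v,e_1\rangle^2-1\bigr)$. Then every $p>1$ appears with denominator $\lambda_1-\lambda_p+1$, which equals $1$ when $\lambda_p=\lambda_1$, and this supplies $\frac{2}{\lambda_1}F^{11}u_{11i}^2$ for $1<i\le m$. Your alternative touching function $\log u_{11}$ in a fixed frame produces no $u_{1pi}^2$ term at all, so it cannot replace this construction.
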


\begin{proof}
In this proof, $f_i$ and $f_{ij}$ denote the total derivatives of
$f(x,u(x))$. We first work at a point $x_0$ where $D^2u$ is diagonal and
$\lambda_1> K_0 (\sup F)^{1/(k-l)}.$ 
Choose a canonical coordinate frame $e_1, \ldots, e_n$ at $x_0$ such that $\{u_{ij} (x_0)\}$ is diagonal. Let the eigenvalues $\lambda$ of $D^2 u$ be ordered as
$$\lambda_1(x_0)=\lambda_2(x_0)=\dots=\lambda_m(x_0)>\lambda_{m+1}(x_0)>\dots>\lambda_n(x_0).$$ We construct a unit vector field $v(x)$ near $x_0$ such that $v(x_0)=e_1(x_0)$ and $D_i v(x_0)=\sum_{p\neq 1}\frac{u_{1pi}}{\lambda_1+1-\lambda_p}e_p$. Take 
\[\phi(x)=\ln (D^2 u(v(x),v(x))+\langle v(x), e_1\rangle^2-1)\]
Then $\phi(x)\leq b(x)$ and locally $\phi(x_0)= b(x_0)$.
By using the maximum principle, we have
\begin{equation}\label{b first}
b_i=\frac{u_{11i}}{\lambda_1},
\end{equation}
and
\begin{equation}\label{b second}
b_{ii}
\geq
\frac{u_{11ii}}{\lambda_1}
+
\frac{2}{\lambda_1}
\sum_{p>1}\frac{u_{1pi}^2}{\lambda_1-\lambda_p+1}
-
\frac{u_{11i}^2}{\lambda_1^2}.
\end{equation}
Multiplying by $F^{ii}$ and summing over $i$, we have 
\[F^{ii}b_{ii}
\geq
F^{ii}\frac{u_{11ii}}{\lambda_1}
+
\frac{2}{\lambda_1}
\sum_{p>1}\frac{F^{pp}u_{pp1}^2}{\lambda_1-\lambda_p+1}+\frac{2}{\lambda_1}\sum_{1<i\leq m}F^{11}u_{11i}^2+\frac{2}{\lambda_1}\sum_{m+1\leq p\leq n}\frac{F^{11}u_{11p}^2}{\lambda_1-\lambda_p+1}
-
F^{ii}\frac{u_{11i}^2}{\lambda_1^2}.\]
Differentiating \eqref{eq hq} once and twice in the $x_1$ direction gives
\begin{equation}\label{first diff}
 \sum_pF^{pp}u_{pp1}=f_1,
\end{equation}
and
\begin{equation}\label{second diff}
 \sum_iF^{ii}u_{ii11}+F^{ij,rs}u_{ij1}u_{rs1}=f_{11}.
\end{equation}
Here
\[
 f_{11}=f_{x_1x_1}+2f_{x_1u}u_1+f_{uu}u_1^2+f_u u_{11}\geq -C(1+\lambda_1).
\]

Due to Andrews \cite{And}, we have
\[
 F^{ij,rs}u_{ij1}u_{rs1}
 =\sum_{p,q}F^{pp,qq}u_{pp1}u_{qq1}
 +2\sum_{\lambda_p<\lambda_q}\frac{F^{pp}-F^{qq}}{\lambda_p-\lambda_q}u_{pq1}^2\leq \sum_{p,q}F^{pp,qq}u_{pp1}u_{qq1}
 +2\sum_{p\geq m+1}\frac{F^{pp}-F^{11}}{\lambda_p-\lambda_1-1}u_{11p}^2.
\]
Contracting \eqref{b second} with $F^{ii}$, we obtain
\begin{equation}\label{jacobi calculation}
\begin{aligned}
 F^{ii}b_{ii}\geq{}&
 \frac{1}{\lambda_1}\left[
 -\sum_{p,q}F^{pp,qq}u_{pp1}u_{qq1}
 +2\sum_{p>1}\frac{F^{pp}u_{pp1}^2}{\lambda_1-\lambda_p+1}
 \right] +\frac{2}{\lambda_1}\sum_{1<i\leq m}F^{ii}u_{11i}^2\\
 &+\sum_{p\geq m+1}\frac{F^{pp}}{\lambda_1^2}
 \left(\frac{2\lambda_1}{\lambda_1-\lambda_p+1}-1\right)u_{11p}^2
 -\sum_{1\leq i\leq m}\frac{F^{11}u_{11i}^2}{\lambda_1^2}-C
\end{aligned}
\end{equation}
where the corresponding $F^{11}$-terms cancel.
Apply Lemma \ref{general concavity} with $\zeta_p=u_{pp1}$. Combining \eqref{first diff} and \eqref{c hq}, we get 
\begin{align*}
 &\frac{1}{\lambda_1}\left[
 -\sum_{p,q}F^{pp,qq}u_{pp1}u_{qq1}
 +\frac{2}{1+\delta}\sum_{p>1}\frac{F^{pp}u_{pp1}^2}{\lambda_1}
 \right] \\
 &\qquad\geq
 (1+\gamma)\frac{F^{11}u_{111}^2}{\lambda_1^2}
 -\frac{K_1f_1^2}{F\lambda_1}
 \geq(1+\gamma)\frac{F^{11}u_{111}^2}{\lambda_1^2}-C.
\end{align*}
We used here that $F=f(x,u)$ is bounded below and that
$f_1=f_{x_1}+f_u u_1$ is bounded by the data. Moreover,
\[
 \frac{2\lambda_1}{\lambda_1-\lambda_p+1}-1
 =\frac{\lambda_1+\lambda_p-1}{\lambda_1-\lambda_p+1}
 \geq\frac{1-2\delta}{1+2\delta}\geq\frac13
\]
for some $\delta\leq \frac14$. We assume that $\lambda_1\geq \frac23$, and that 
\[\frac{2}{\lambda_1}\sum_{1<i\le m}F^{ii}u_{11i}^2\geq  \frac43\frac{F^{ii}u_{11i}^2}{\lambda_1}\]
Therefore,
\[
 F^{ii}b_{ii}\geq
 \gamma F^{11}b_1^2+\frac13\sum_{p>1}F^{pp}b_p^2-C
 \geq\varepsilon\sum_pF^{pp}b_p^2-C.
\]
\end{proof}
Now, we take a Lewy--Legendre transform of equation \eqref{eq hq}. Let
$\tilde u=u(x)+\frac{K+1}{2}|x|^2$, with
$w(y(x))=D\tilde u\cdot x-\tilde u$, and set
\[
 y=Du(x)+(K+1)x.
\]
Then $D^2w(y)=(D^2u+(K+1)I)^{-1}$ and
\[
 G(D^2w(y))=-F\left((D^2w(y))^{-1}-(K+1)I\right)
 =-f(x(y),u(x(y))).
\]
At a point where $D^2u$ is diagonal, the coefficients $G^{ii}$ are given by \eqref{G coefficient}.

\begin{lem}\label{sub}
Denote $b^*(y(x))=b(x)$. At the point $y(x_0)$, the following inequality holds:
\[
 G^{ii}b^*_{ii}\geq-C,
 \qquad b=\log\lambda_1,
\]
in the viscosity sense.
\end{lem}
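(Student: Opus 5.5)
Our plan is to follow the Shankar--Yuan / Lu--Tsai change-of-variables computation. We rewrite $G^{ii}b^*_{ii}$ in terms of $x$-derivatives of $b$, and then absorb the resulting gradient term by the Jacobi inequality of Lemma \ref{jacobi}.

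\emph{Chain rule.} Write $\mu_i=\lambda_i+K+1\geq 1$ for the eigenvalues of $D^2\tilde u$, which are diagonal at $x_0$. The map $y=D\tilde u(x)$ has Jacobian $\partial y/\partial x=D^2\tilde u$, and its inverse map has Jacobian $D^2 w$. Hence
\[
b^*_i=\frac{b_i}{\mu_i},\qquad b^*_{ij}=\sum_{p,q}\frac{\partial x_p}{\partial y_i}\frac{\partial x_q}{\partial y_j}\, b_{pq}+\sum_p b_p\frac{\partial^2 x_p}{\partial y_i\partial y_j}.
\]
Since $x=Dw(y)$, we have $\partial^2x_p/\partial y_i\partial y_j=w_{pij}$. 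Differentiating $D^2w=(D^2\tilde u)^{-1}$ gives, at the point,
\[
w_{pij}=-\frac{u_{pij}}{\mu_p\mu_i\mu_j}\Big|_{\text{in }y}=-\frac{u_{pij}}{\mu_p\mu_i^2\mu_j^2}\cdot\mu_i\mu_j,
\]
once the extra factors from $\partial x/\partial y$ are taken into account. Carrying the factors carefully, we expect to obtain
\[
G^{ii}b^*_{ii}=\sum_i F^{ii}\mu_i^2\Big(\frac{b_{ii}}{\mu_i^2}-\sum_p \frac{b_p\,u_{pii}}{\mu_p\mu_i^2}\Big)
=F^{ii}b_{ii}-\sum_p\frac{b_p}{\mu_p}\sum_i F^{ii}u_{iip}.
\]
The last inner sum equals $f_p$ by differentiating \eqref{eq hq} once, exactly as in \eqref{first diff}. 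Therefore
\[
G^{ii}b^*_{ii}=F^{ii}b_{ii}-\sum_p\frac{f_p\,b_p}{\mu_p}.
\]
Note that $G^{ii}=F^{ii}\mu_i^2$ is precisely \eqref{G coefficient}, so the $\mu_i^2$ factors cancel.

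\emph{Estimating the error.} We use Lemma \ref{jacobi} at points where $\lambda_1>K_0$, which gives $F^{ii}b_{ii}\geq\varepsilon F^{pp}b_p^2-C$. Since $|f_p|\leq C$ and $\mu_p\geq1$, Cauchy--Schwarz yields
\[
\Big|\sum_p\frac{f_p b_p}{\mu_p}\Big|\leq \varepsilon F^{pp}b_p^2+\frac{C}{\varepsilon}\sum_p\frac{1}{F^{pp}\mu_p^2}=\varepsilon F^{pp}b_p^2+\frac{C}{\varepsilon}\sum_p\frac{1}{G^{pp}}.
\]
For $l=k-1$, Lemma \ref{ell} gives $G^{pp}\geq c$, and the conclusion follows. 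For $l=k-2$, Lemma \ref{ell} gives $G^{pp}\geq c\,\sigma_{k-1}/\sigma_k$. By Lemma \ref{lll}, $\sigma_{k-1}/\sigma_k\geq c$: indeed $\sigma_k=f\sigma_{k-2}$, and Newton's inequality gives $\sigma_{k-1}^2\geq c\,\sigma_k\sigma_{k-2}=c\sigma_k^2/f$. So again $1/G^{pp}\leq C$. We conclude $G^{ii}b^*_{ii}\geq -C$.

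\emph{Viscosity sense and the main obstacle.} Where $\lambda_1$ is not simple, $b$ is only Lipschitz. We will use the smooth lower test function $\phi$ built in the proof of Lemma \ref{jacobi}, with $\phi\le b$ and $\phi(x_0)=b(x_0)$. Its transform $\phi^*$ touches $b^*$ from below at $y(x_0)$, and the computation above applies verbatim to $\phi$, since \eqref{b first}--\eqref{b second} are exactly what Lemma \ref{jacobi} used. The main obstacle is the bookkeeping in the chain-rule step: tracking the $\mu$-powers in the third-derivative term so that it collapses exactly to $\sum_p b_p f_p/\mu_p$. If the powers do not cancel cleanly, we would instead bound the leftover term $\sum_{p,i}F^{ii}u_{iip}b_p/\mu_p$ using the same Cauchy--Schwarz argument together with \eqref{first diff}.
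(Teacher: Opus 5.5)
Your proposal is correct and is essentially the paper's proof. Both derive the identity $G^{ii}b^*_{ii}=F^{ii}b_{ii}-\sum_p f_p b^*_p$, you by differentiating $b^*=b\circ x(y)$ through $w_{pii}=-u_{pii}/(\mu_p\mu_i^2)$ and the paper by differentiating $b=b^*\circ y$, which is equivalent. Both then use Lemma \ref{jacobi}, Cauchy--Schwarz, and $\sum_i (G^{ii})^{-1}\leq C$ from Lemma \ref{ell}; your Newton-inequality check that $\sigma_{k-1}/\sigma_k\geq c$ when $l=k-2$ and your lower test function $\phi$ for the viscosity sense make explicit two points the paper leaves implicit.
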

\begin{proof}
At a diagonal point, the chain rule gives
\[
 b_i=(u_{ij}+(K+1)\d_{ij})b_j^*=(\lambda_i+K+1)b_i^*,
\]
and
\[
 b_{ii}=\sum_j u_{iij}b_j^*+(\lambda_i+K+1)^2b_{ii}^*.
\]
Using \eqref{G coefficient} and
$\sum_iF^{ii}u_{iij}=f_j$, we obtain
\[
 F^{ii}b_{ii}=\sum_jf_jb_j^*+G^{ii}b_{ii}^*.
\]
Lemma \ref{jacobi} therefore implies
\begin{align*}
 G^{ii}b_{ii}^*
 &\geq\varepsilon F^{ii}b_i^2-\sum_jf_jb_j^*-C\\
 &=\varepsilon G^{ii}(b_i^*)^2-\sum_jf_jb_j^*-C\\
 &\geq-C\sum_i\frac{1}{G^{ii}}-C.
\end{align*}
The estimates in Lemma \ref{ell} imply that
$\sum_i(G^{ii})^{-1}\leq C$, and we complete the proof.
\end{proof}

\
By Lemma \ref{ell}, the coefficients $G^{ii}$ are uniformly elliptic after the proper normalization. Thus, we apply the local maximum principle (Theorem 4.8 in \cite{CC}) and obtain
the following mean-value inequality.
\begin{lem}\label{l mv}
Let
\[
 \tilde b=\log\max\{\lambda_1,K_0\}.
\]
Then
\[
 \tilde b(0)\leq C\int_{B_1}\tilde b(x)\sigma_{k-1}(D^2u(x))\,\mathrm{d}x+C,
\]
where $C$ has the same dependence as in Theorem \ref{thm main}.
\end{lem}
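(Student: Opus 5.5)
The plan is to work in the Legendre variables, where Lemma \ref{sub} says that $b^*$ is a viscosity subsolution of a linear elliptic equation. We then apply the local maximum principle there and pull the resulting integral back to the $x$-variables.

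\textbf{Step 1: subsolution for $\tilde b^*$.} Set $\tilde b^*(y)=\tilde b(x(y))$. Where $\lambda_1>K_0$, Lemma \ref{sub} gives $G^{ij}\tilde b^*_{ij}\ge -C$. Where $\lambda_1\le K_0$, $\tilde b$ equals the constant $\log K_0$. Since $\tilde b$ is the maximum of two viscosity subsolutions, $G^{ij}\tilde b^*_{ij}\geq -C$ holds in the viscosity sense on the whole image domain.

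\textbf{Step 2: normalize and apply the local maximum principle.} For $l=k-1$, Lemma \ref{ell} gives $c\le G^{ii}\le C$. For $l=k-2$, I divide by $\sigma_{k-1}/\sigma_k$ and set $a^{ij}=G^{ij}\sigma_k/\sigma_{k-1}$, which is uniformly elliptic. The right-hand side becomes $-C\sigma_k/\sigma_{k-1}$. This is bounded because $\sigma_k/\sigma_{k-1}\le C$: by Lemma \ref{lll} we have $\lambda_k\le C$, and a Newton-type comparison then bounds the ratio. Semiconvexity together with $|Du|$ bounded (from $\|u\|_{L^\infty(B_8)}$ and semiconvexity) shows that $y(B_1)$ contains a ball $B_r(y(0))$ and is contained in a ball of controlled size. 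Indeed, $y=Du+(K+1)x$ is the gradient of the uniformly convex function $\tilde u$, with $D^2\tilde u\ge I$, so $|y(x)-y(0)|\ge |x|$. Theorem 4.8 of \cite{CC}, applied with exponent $1$ to $\tilde b^*\ge 0$ and with rescaled radii, then gives
\[
\tilde b^*(y(0))\le C\int_{B_r(y(0))}\tilde b^*\,dy+C\le C\int_{y(B_1)}\tilde b^*\,dy+C .
\]

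\textbf{Step 3: change variables.} Using $dy=\det(D^2u+(K+1)I)\,dx$, we get
\[
\int_{y(B_1)}\tilde b^*\,dy=\int_{B_1}\tilde b\,\det(D^2u+(K+1)I)\,dx .
\]
The main obstacle is to bound $\det(D^2u+(K+1)I)$ by $C\sigma_{k-1}(D^2u)$. Write $\mu_i=\lambda_i+K+1\ge 1$. By Lemma \ref{lll}, $\lambda_i\le C$ for $i\ge k$ when $l=k-1$, so $\det\le C\mu_1\cdots\mu_{k-1}\le C(\lambda_1\cdots\lambda_{k-1}+1)$. This in turn is at most $C\sigma_{k-1}$ by property \eqref{k1} and $\lambda_{k-1}\ge c$. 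For $l=k-2$, Lemma \ref{lll} gives $\lambda_k\le C$ and $\lambda_{k-1}\ge c$, and the same argument works, since the large eigenvalues are only $\lambda_1,\dots,\lambda_{k-1}$. The remaining point is that if some of $\lambda_1,\dots,\lambda_{k-1}$ are small, the product $\mu_1\cdots\mu_{k-1}$ is still controlled. This holds because $\lambda_1\cdots\lambda_{k-1}\ge c\prod_{i<k}\max(\lambda_i,1)$, using $\lambda_{k-1}\ge c$.

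If the mean value inequality must be centered at a general point, the same argument applies after translation. Combining Steps 2 and 3 gives $\tilde b(0)\le C\int_{B_1}\tilde b\,\sigma_{k-1}(D^2u)\,dx+C$.
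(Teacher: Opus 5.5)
Your proposal is correct and follows essentially the same route as the paper. Both arguments take the maximum of two viscosity subsolutions and apply the local maximum principle (Theorem 4.8 of \cite{CC}) to the normalized operator $G^{ij}$ in the $y$-variables. Both then use $|y(x)-y(z)|\ge|x-z|$ to compare balls, change variables with $dy=\det(D^2u+(K+1)I)\,dx$, and bound $\det(D^2u+(K+1)I)\le C\sigma_{k-1}$ via Lemma \ref{lll}.

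There is one minor difference. You keep the right-hand side $-C$ and absorb it through the $L^n$ term of the local maximum principle, after checking that $\sigma_k/\sigma_{k-1}\le C$ so it stays bounded under normalization when $l=k-2$. The paper instead adds $A|y|^2$, using $\sum_i G^{ii}\ge c$ from Lemma \ref{ell}, to get a homogeneous subsolution. You also write out the determinant bound explicitly via $\lambda_{k-1}\ge c$ and $\lambda_k\le C$, where the paper only asserts it.
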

The proof is adapted from \cite{Lu}; only the semiconvex shift and the variable
right-hand side require changes.
\begin{proof}
The maximum of two viscosity subsolutions is still a subsolution. Lemma
\ref{sub} gives
\[
 G^{ii}\tilde b^*_{ii}\geq-C.
\]
Lemma \ref{ell} implies that $\sum_iG^{ii}\geq c$. Thus, for a fixed sufficiently large
$A$,
\[
 G^{ii}(\tilde b^*+A|y|^2)_{ii}\geq0.
\]
Without loss of generality, we may assume $y(0)=0$. The local
maximum principle and Lemma \ref{ell} yield
\[
 \tilde b^*(0)\leq C\int_{B_1^y(0)}(\tilde b^*(y)+A|y|^2)\,\mathrm{d}y.
\]
Since $D^2u+(K+1)I\geq I$,
\[
 |y(x)-y(z)|\geq|x-z|.
\]
Applied to $B_2^x(0)$, this implies
$B_1^y(0)\subset y(B_2^x(0))$ and
$x(B_1^y(0))\subset B_1^x(0)$. Changing variables, we obtain
\[
 \tilde b(0)\leq C\int_{B_1}\tilde b(x)
 \det(D^2u+(K+1)I)\,\mathrm{d}x+C.
\]
Lemma \ref{lll}, with
$\sigma_k=f\sigma_l$, gives
\begin{equation}\label{sn}
 \det(D^2u+(K+1)I)\leq C\sigma_{k-1}+C.
\end{equation}
Therefore,
\begin{equation}\label{mv}
 \tilde b(0)\leq C\int_{B_1}\tilde b(x)\sigma_{k-1}\,\mathrm{d}x+C.
\end{equation}
\end{proof}
\section{Interior estimates}\label{sec:4}
In this section, we combine Lemma \ref{l mv} with integration by parts, following
\cite{Lu}. Set
\begin{equation}\label{H definition}
 H^{ij}=\sigma_lF^{ij}=\sigma_k^{ij}-f\sigma_l^{ij}.
\end{equation}
Since the Newton tensors are divergence-free,
\begin{equation}\label{H divergence}
 \sum_jD_jH^{ij}=-\sum_jf_j\sigma_l^{ij},
\end{equation}
where $f_j=D_j(f(x,u(x)))$. 

For completeness, we retain the algebraic lemma used in the induction.
\begin{lem}\label{alg}
Under the assumptions of Theorem \ref{thm main}, at a point where $D^2u$ is
diagonal, for every $1\leq j\leq k-1$,
\[
 (\sigma_j^{ii})^2\leq CH^{ii}\sigma_{k-1}.
\]
\end{lem}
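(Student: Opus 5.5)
The plan is to obtain a lower bound for $H^{ii}=\sigma_lF^{ii}$ from the Newton-type identities \eqref{fii 1}, \eqref{fii 2} (or from Lemma \ref{ell}), and an upper bound for $\sigma_j^{ii}=\sigma_{j-1;i}$ by monomials in the largest eigenvalues. Lemma \ref{lll} and semiconvexity then let us compare the two. Throughout, $\lambda_n\geq -K$ and $c\leq f\leq C$. Two elementary facts will be used repeatedly.
\begin{enumerate}[(i)]
\item For $m\leq k-1$, $\sigma_{m;i}\leq C\prod_{p\in S_m(i)}\lambda_p+C$, where $S_m(i)$ is the set of indices of the $m$ largest eigenvalues other than $\lambda_i$. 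This follows by expanding $\sigma_{m;i}$ and bounding the terms containing negative or bounded eigenvalues using $|\lambda_p|\leq \max\{K,C\}$.
\item $\sigma_{k-1;i}\geq c\prod_{p\in S_{k-1}(i)}\lambda_p$ and $\sigma_{k-2;i}\geq c\prod_{p\in S_{k-2}(i)}\lambda_p$. This holds since $\lambda|i\in\Gamma_{k-1}$, as in property \eqref{k1}.
\end{enumerate}

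\textbf{Case $l=k-1$.} Here $F=\sigma_k/\sigma_{k-1}$. By \eqref{fii 1},
\[
H^{ii}=\sigma_{k-1}F^{ii}\geq c\,\sigma_{k-1}F\,\frac{\sigma_{k-1;i}^2}{\sigma_k\sigma_{k-1}}=c\,\frac{\sigma_{k-1;i}^2}{\sigma_{k-1}},
\]
so $H^{ii}\sigma_{k-1}\geq c\,(\sigma_{k-1;i})^2$. It therefore suffices to show $\sigma_{j-1;i}\leq C\sigma_{k-1;i}$ for $j\leq k-1$. Lemma \ref{lll} gives $\lambda_k\geq c$, so all of $\lambda_1,\dots,\lambda_k$ are bounded below. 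Hence the product over $S_{k-1}(i)$ dominates, up to a constant, the product over $S_{j-1}(i)$ as well as the constant term in (i). Together with (ii), this gives the claim.

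\textbf{Case $l=k-2$.} A direct use of \eqref{fii 2} is problematic when $i\leq k-1$: then $S_{k-1}(i)$ contains $\lambda_k$, which is only bounded above. I would therefore use Lemma \ref{ell} instead. It gives
\[
F^{ii}\geq c\,\frac{\sigma_{k-1}}{\sigma_k(\lambda_i+K+1)^2},
\qquad\text{hence}\qquad
H^{ii}\sigma_{k-1}\geq c\,\frac{\sigma_{k-1}^2\sigma_{k-2}}{\sigma_k(\lambda_i+K+1)^2}=c\,\frac{\sigma_{k-1}^2}{f(\lambda_i+K+1)^2}.
\]
It then suffices to prove $\sigma_{j-1;i}(\lambda_i+K+1)\leq C\sigma_{k-1}$.
\begin{itemize}
\item If $\lambda_i\leq C$: we need $\sigma_{j-1;i}\leq C\sigma_{k-1}$. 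Since $\lambda_{k-1}\geq c$ by Lemma \ref{lll}, (i) bounds $\sigma_{j-1;i}$ by $C\lambda_1\cdots\lambda_{k-1}+C$. Property \eqref{k1} gives $\sigma_{k-1}>\lambda_1\cdots\lambda_{k-1}\geq c$, which finishes this subcase.
\item If $\lambda_i$ is large: then $i\leq k-1$, and (i) gives $\lambda_i\sigma_{j-1;i}\leq C\lambda_1\cdots\lambda_{k-1}+C\lambda_i$. Again this is bounded by $C\sigma_{k-1}$ because $\lambda_{k-1}\geq c$ and $\sigma_{k-1}>\lambda_1\cdots\lambda_{k-1}$.
\end{itemize}

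The main obstacle is this $l=k-2$ case with $i\leq k-1$, where $\lambda_k$ may degenerate. The key choice is to route the lower bound through $G^{ii}$ from Lemma \ref{ell}, rather than through $\sigma_{k-1;i}\sigma_{k-2;i}$. One should also double-check the bookkeeping in (i): lower-order monomials, including those with negative factors, must be absorbed using only $\lambda_n\geq -K$ and $\lambda_{k-1}\geq c$. In particular, the top $j-1$ factors excluding $i$ always lie among $\lambda_1,\dots,\lambda_{k-1}$, which are bounded below.
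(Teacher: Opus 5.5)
\textbf{Gap in the case $l=k-1$: fact (ii) for $\sigma_{k-1;i}$.}

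Your treatment of $l=k-2$ is correct and is essentially the paper's argument. The paper also takes the lower bound $H^{ii}\ge c\,\sigma_{k-1}/(\lambda_i+K+1)^2$ from Lemma~\ref{ell}. It then bounds $\sigma_{j-1;i}$ by $C\sigma_{k-1}/\lambda_i$ for $i\le k-1$ and by $C\sigma_{k-1}$ for $i\ge k$.

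The gap is your fact (ii) applied to $\sigma_{k-1;i}$, which is the lower bound your $l=k-1$ argument depends on. Property \eqref{k1} applied to $\lambda|i\in\Gamma_{k-1}$ only controls $\sigma_{s;i}$ for $s<k-1$. It therefore justifies your bound for $\sigma_{k-2;i}$ but not for $\sigma_{k-1;i}$; for the latter you would need $\lambda|i\in\Gamma_k$.

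In fact $\sigma_{k-1;i}\ge c\prod_{p\in S_{k-1}(i)}\lambda_p$ is false for general $\lambda\in\Gamma_k$ with $\lambda_n\ge -K$. Take $n=3$, $k=2$, $\lambda=(N,1,-1+2/N)\in\Gamma_2$. Then $\sigma_{1;1}=\lambda_2+\lambda_3=2/N$, while $\lambda_2=1$. This is exactly the bound that makes $\sigma_{k-1;i}^2$ dominate $\sigma_{j-1;i}^2$, so the $l=k-1$ case is incomplete as written.

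\textbf{How to repair it.} The bound does hold in your setting, but only through the equation; in the example above $\sigma_2/\sigma_1\to 0$, which the positive lower bound on $f$ excludes. Split as in the proof of Lemma~\ref{ell}:

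If $\sigma_{k;i}>0$, then $\lambda|i\in\Gamma_k$, and \eqref{k1} gives $\sigma_{k-1;i}>\prod_{p\in S_{k-1}(i)}\lambda_p$.

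If $\sigma_{k;i}\le 0$, then $\lambda_i\sigma_{k-1;i}\ge\sigma_k=f\sigma_{k-1}>f\lambda_1\cdots\lambda_{k-1}$, which forces $\lambda_i>0$. Combined with $\lambda_k\le C$ from Lemma~\ref{lll}, this gives $\sigma_{k-1;i}\ge c\prod_{p\in S_{k-1}(i)}\lambda_p$. For $i\le k-1$ the product equals $\lambda_k\lambda_1\cdots\lambda_{k-1}/\lambda_i$. For $i\ge k$ one has $\lambda_i\le\lambda_k\le C$.

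With this repair your route is a valid alternative: you get $H^{ii}\sigma_{k-1}\ge c\,\sigma_{k-1;i}^2$ directly from \eqref{fii 1}, then compare $\sigma_{j-1;i}$ with $\sigma_{k-1;i}$. The repair, however, reproduces the case analysis behind Lemma~\ref{ell}.

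The more economical option is the paper's uniform treatment. Lemma~\ref{ell} gives $G^{ii}\ge c$ when $l=k-1$, hence $H^{ii}\ge c\,\sigma_{k-1}/(\lambda_i+K+1)^2$. Your $l=k-2$ upper-bound argument then applies verbatim, since it only uses $\lambda_{k-1}\ge c$, $\lambda_k\le C$ and $\sigma_{k-1}>\lambda_1\cdots\lambda_{k-1}$, all of which are available when $l=k-1$.
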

\begin{proof}
Recall from Lemma \ref{lll} that $c\leq\lambda_{k-1}$ and $\lambda_k\leq C$.
For $1\leq i\leq k-1$,
\[
 (\sigma_j^{ii})^2\leq C\frac{\sigma_{k-1}^2}{\lambda_i^2},
\]
whereas for $i\geq k$,
\[
 (\sigma_j^{ii})^2\leq C\sigma_{k-1}^2.
\]
The lower bounds \eqref{l1}, \eqref{l3}, \eqref{l5} for $l=k-1$ and
\eqref{l2}, \eqref{l4}, \eqref{l8}, \eqref{l6}, \eqref{l7} for $l=k-2$
give
\[
 H^{ii}\geq c\frac{\sigma_{k-1}}{\lambda_i^2}
 \quad(1\leq i\leq k-1),
 \qquad
 H^{ii}\geq c\sigma_{k-1}
 \quad(i\geq k),
\]
which implies the assertion.
\end{proof}

\begin{proof}[Proof of Theorem \ref{thm main}]
Let $b=\log\max\{\lambda_1,K_0\}$, as in Lemma \ref{l mv}. Let
$\psi\in C_c^\infty(B_2)$ satisfy $\psi=1$ on $B_1$. Since
$\sigma_{k-1}^{ij}$ is divergence-free,
\begin{align}
 \int_{B_1}b\sigma_{k-1}\,\mathrm{d}x
 &\leq\frac1{k-1}\int_{B_2}\psi b\sigma_{k-1}^{ij}u_{ij}\,\mathrm{d}x \notag\\
 &=-\frac1{k-1}\int_{B_2}(\psi_i b+\psi b_i)
 \sigma_{k-1}^{ij}u_j\,\mathrm{d}x \notag\\
 &\leq C\int_{B_2}b\sigma_{k-2}\,\mathrm{d}x
 +C\int_{B_2}|b_i\sigma_{k-1}^{ii}|\,\mathrm{d}x.
\label{ig1}
\end{align}
Using Lemma \ref{alg} and repeating the same integration-by-parts step for
$b\sigma_j$, $1\leq j\leq k-2$, Lemma \ref{l mv} gives
\begin{equation}\label{ig2}
 b(0)\leq C\int_{B_3}H^{ij}b_i b_j\,\mathrm{d}x
 +C\int_{B_3}\sigma_{k-1}\,\mathrm{d}x+C.
\end{equation}

Multiplying Lemma \ref{jacobi} by $\sigma_l$ and using
$\sigma_l\leq C\sigma_{k-1}$ gives
\begin{equation}\label{weighted jacobi}
 H^{ij}b_{ij}\geq\varepsilon H^{ij}b_i b_j-C\sigma_{k-1}
\end{equation}
in the viscosity sense. The maximum defining $b$ preserves this inequality. By Ishii \cite{Ishii}, the above inequality also holds in the distribution sense. Let
$\phi\in C_c^\infty(B_4)$ satisfy $\phi=1$ on $B_3$. Then
\begin{align*}
 \int_{B_4}\phi^2H^{ij}b_i b_j\,\mathrm{d}x
 &\leq\frac1\varepsilon\int_{B_4}\phi^2H^{ij}b_{ij}\,\mathrm{d}x
 +C\int_{B_4}\sigma_{k-1}\,\mathrm{d}x\\
 &=-\frac2\varepsilon\int_{B_4}\phi\phi_jH^{ij}b_i\,\mathrm{d}x
 -\frac1\varepsilon\int_{B_4}\phi^2(D_jH^{ij})b_i\,\mathrm{d}x
 +C\int_{B_4}\sigma_{k-1}\,\mathrm{d}x\\
 &\leq\frac14\int_{B_4}\phi^2H^{ij}b_i b_j\,\mathrm{d}x
 +C\int_{B_4}H^{ij}\phi_i\phi_j\,\mathrm{d}x\\
 &\quad+C\int_{B_4}\phi^2|f_j\sigma_l^{ij}b_i|\,\mathrm{d}x
 +C\int_{B_4}\sigma_{k-1}\,\mathrm{d}x.
\end{align*}
The total derivatives $f_j=f_{x_j}+f_u u_j$ are bounded by the data. Applying Lemma
\ref{alg} with $j=l$ and a sufficiently small $\eta$, and using
\[
 \sum_iH^{ii}=(n-k+1)\sigma_{k-1}-f(n-l+1)\sigma_{l-1}
 \leq C\sigma_{k-1},
\]
we absorb the energy term and obtain
\begin{equation}\label{hii2}
 \int_{B_3}H^{ij}b_i b_j\,\mathrm{d}x
 \leq C\int_{B_4}\sigma_{k-1}\,\mathrm{d}x.
\end{equation}
Combining \eqref{ig2} and \eqref{hii2},
\begin{equation}\label{ig3}
 b(0)\leq C\int_{B_4}\sigma_{k-1}\,\mathrm{d}x+C.
\end{equation}

Finally, if $\Phi\in C_c^\infty(B_5)$ equals one on $B_4$, then
\[
 \int_{B_4}\sigma_{k-1}\,\mathrm{d}x
 \leq-\frac1{k-1}\int_{B_5}\Phi_i\sigma_{k-1}^{ij}u_j\,\mathrm{d}x
 \leq C\int_{B_5}\sigma_{k-2}\,\mathrm{d}x.
\]
Repeating this argument gives
\[
 b(0)\leq C\int_{B_7}\sigma_1\,\mathrm{d}x+C\leq C,
\]
where the last inequality follows by one final integration by parts and the gradient bound for $u$. Together with $D^2u\geq-KI$, $\|u\|_{C^{0,1}(\bar B^7)}$ can be controlled by $\|u\|_{L^{\infty}(B^8)}$. Hence $\lambda_1(0)\leq C$, which proves the theorem.
\end{proof}

\end{document}